\numberwithin{equation}{section}
\DeclareMathOperator{\gr}{gr}
\DeclareMathOperator{\rank}{rank}
  \DeclareMathOperator{\lin}{lin}
\newtheorem{pr}[subsection]{Proposition}
\newtheorem{teo}[subsection]{Theorem}
\theoremstyle{definition}
\newtheorem{de}[subsection]{Definition}
\theoremstyle{remark}
\newtheorem{cons}{Consequence}
\begin{document}

\title[Some classes of nilpotent associative algebras]
{Some classes of nilpotent associative algebras}

\author{I.A.~Karimjanov, M.~Ladra}
\address{[I.A.~Karimjanov, M.~Ladra] Department of Mathematics, Institute of Mathematics,  University of Santiago de Compostela, 15782, Spain}
\email{iqboli@gmail.com, manuel.ladra@usc.es}

\thanks{This work was supported by Agencia Estatal de Investigaci\'on (Spain), grant MTM2016-79661-P (European FEDER support included, UE)}

\begin{abstract}
In this paper we classify filiform associative algebras of degree $k$ over a field of characteristic zero. Moreover, we also classify naturally graded complex filiform and quasi-filiform nilpotent associative algebras which are described by the characteristic sequence $C(\mathcal{A})=(n-2,1,1)$ or $C(\mathcal{A})=(n-2,2)$.
\end{abstract}

\subjclass[2010]{16S50, 16W50}
\keywords{associative algebras, nilpotent, null-filiform, naturally graded, filiform, quasi-filiform,  characteristic sequence,  left multiplication operator}

\maketitle

\section{Introduction}

The classification of associative algebras is an old and often recurring problem.  Associative algebras are one of the classical algebras that have extensively been studied and found to be related to other classical algebras like Lie and Jordan algebras.

In the 1890s Cartan, Frobenius and Molien proved (independently) the following fundamental structure theorem for a finite-dimensional associative algebra $\mathcal{A}$ over the real or complex numbers:

\begin{itemize}
  \item $\mathcal{A}=S\oplus N$, where $N$ is nilpotent and $S$ is semi-simple;
  \item $S=C_1\oplus C_2\oplus\dots\oplus C_n$, where $C_i$ are simple algebras;
  \item $C_i=M_{n_i}(D_i)$, the algebra of $n_i\times n_i$ matrices with entries from a division algebra $D_i$.
\end{itemize}

The first researches into the classification of low-dimensional associative algebras was done by Peirce \cite{Pr}. Hazlett \cite{Haz} classified nilpotent algebras of dimension less and equal then $4$ over the complex numbers. Later, Kruse and Price \cite{Kr} classified nilpotent associative algebras of dimension less and equal then $4$ over any field. Mazzola \cite{Maz1} published
his results on associative unitary algebras of dimension $5$ over algebraic closed fields of characteristic different from $2$, and on nilpotent commutative associative algebras of dimension less and equal $5$ over algebraic closed fields of characteristic different from $2,3$ \cite{Maz2}. Poonen \cite{Poo} analyzed nilpotent commutative associative algebras of dimension less and equal $5$ over  any algebraic closed field
 and Eick and Moede \cite{Eick, Eick2} initiated on an arbitrary field a coclass
theory for nilpotent associative algebras, giving a different view on their classification since  they use the coclass as primary invariant, describing  algorithms that allow to investigate the graph  associated with the nilpotent
associative algebras of a certain coclass  for a finite field. Recently, De Graaf classified nilpotent associative algebras of dimension less and equal than $4$ by using method of central extensions \cite{Graaf}.

Most classification problems of finite-dimensional associative algebras have been
studied for certain properties of associative algebras while the complete classification of associative algebras in general is still an open problem.  The theory of finite-dimensional
associative algebras is the one of the ancient areas of the
modern algebra. It originates primarily from works Hamilton,
who discovered the famous quaternions and Cayley, who
developed the theory of matrices. Later, the structural theory
of finite-dimensional associative algebras have been treated by
several mathematicians, notably B. Pierce, C. S. Pierce,
Clifford, Weierstrass, Dedekind, Jordan, Frobenius. At the end
of 19th century, T. Molien and E. Cartan described semi-simple
algebras over the fields of the complex and real numbers.

The purpose of this paper is studying naturally graded arbitrary dimensional associative algebras of nilpotency class $n$ and $n-1$. Similar results for Lie and Leibniz algebras were obtained in the works \cite{An,Gar,Ver,Gom,Cam1,Cam2,Ay}.

This paper is organized as follows. In Section~\ref{S:prel} we provide some basic concepts needed
for this study. In Section~\ref{S:fil} we first classify naturally graded filiform associative algebras  of degree $p$, then describe filiform associative algebras of degree $p$ over a field of characteristic zero. Moreover, we give the classification of $n$-dimensional complex filiform associative algebras. The last section is devoted classifying naturally graded complex quasi-filiform nilpotent associative algebras which are described by the characteristic sequence $C(\mathcal{A})=(n-2,1,1)$ or $C(\mathcal{A})=(n-2,2)$.

\section{Preliminaries}\label{S:prel}
For an algebra $\mathcal{A}$ of an arbitrary variety, we consider the series
\[
\mathcal{A}^1=\mathcal{A}, \qquad \ \mathcal{A}^{i+1}=\sum\limits_{k=1}^{i}\mathcal{A}^k \mathcal{A}^{i+1-k}, \qquad i\geq 1.
\]

We say that  an  algebra $\mathcal{A}$ is \emph{nilpotent} if $\mathcal{A}^{i}=0$ for some $i \in \mathbb{N}$. The smallest integer satisfying $\mathcal{A}^{i}=0$ is called the  \emph{index of nilpotency} or \emph{nilindex} of $\mathcal{A}$.

\begin{de}
An $n$-dimensional algebra $\mathcal{A}$ is called null-filiform if $\dim \mathcal{A}^i=(n+ 1)-i,\ 1\leq i\leq n+1$.
\end{de}

It is easy to see that an algebra has a maximum nilpotency index if and only if it is null-filiform. For a nilpotent algebra, the condition of null-filiformity is equivalent to the condition that the algebra is one-generated.

All null-filiform associative algebras were described in  \cite[Proposition 5.3]{MO}.

\begin{teo}[\cite{MO}] An arbitrary $n$-dimensional null-filiform associative algebra is isomorphic to the algebra:
\[\mu_0^n : \quad e_i e_j= e_{i+j}, \quad 2\leq i+j\leq n,\]
where $\{e_1, e_2, \dots, e_n\}$ is a basis of the algebra $\mathcal{A}$ and the omitted products vanish.
\end{teo}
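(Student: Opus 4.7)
The plan is to use the fact, noted immediately before the theorem, that null-filiformity is equivalent to being one-generated, and then show that any single generator $x$ yields the stated multiplication table when we set $e_i = x^i$.

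First I would choose the generator. Since $\mathcal{A}$ is null-filiform, $\dim \mathcal{A}/\mathcal{A}^2 = n - (n-1) = 1$, so any element $x \in \mathcal{A} \setminus \mathcal{A}^2$ spans $\mathcal{A}/\mathcal{A}^2$. Letting $B$ denote the subalgebra generated by $x$, we have $B + \mathcal{A}^2 = \mathcal{A}$. Substituting this into the defining relation $\mathcal{A}^{k+1} = \sum_{j=1}^{k} \mathcal{A}^j \mathcal{A}^{k+1-j}$ and inducting on $k$ shows $B + \mathcal{A}^{k+1} = \mathcal{A}$ for every $k$; since $\mathcal{A}$ is nilpotent, eventually $\mathcal{A}^{k+1} = 0$, forcing $B = \mathcal{A}$.

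Next I would identify a basis. By associativity, every product of $i$ factors equal to $x$ collapses to the single element $x^i$, so the spanning set of $\mathcal{A}^i$ coming from $B = \mathcal{A}$ reduces to $\{x^i, x^{i+1}, \dots\}$. Combined with nilpotency, this gives $\mathcal{A}^i = \lin\{x^i, x^{i+1}, \dots, x^n\}$. Since $\dim \mathcal{A}^i = n+1-i$ by hypothesis and this spanning set has exactly $n+1-i$ elements, the powers $x, x^2, \dots, x^n$ are linearly independent and form a basis of $\mathcal{A}$, with $x^{n+1} = 0$ because $\mathcal{A}^{n+1} = 0$.

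Finally, setting $e_i := x^i$ for $1 \le i \le n$, associativity gives $e_i e_j = x^i x^j = x^{i+j} = e_{i+j}$ whenever $i+j \le n$ and $e_i e_j = 0$ otherwise, which is precisely the multiplication of $\mu_0^n$. The only delicate step is the argument that $x \notin \mathcal{A}^2$ actually generates $\mathcal{A}$ as an algebra (not just modulo $\mathcal{A}^2$); this is the Nakayama-style iteration in the first paragraph, and everything else is a direct consequence of the dimension count $\dim \mathcal{A}^i = n+1-i$ together with associativity.
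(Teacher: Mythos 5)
Your proof is correct: the Nakayama-style iteration showing that $x\in\mathcal{A}\setminus\mathcal{A}^2$ generates $\mathcal{A}$, followed by the dimension count $\dim\mathcal{A}^i=n+1-i$ forcing $x,x^2,\dots,x^n$ to be a basis with $e_i:=x^i$, is exactly the standard argument. The paper itself gives no proof (it cites \cite{MO}), but its remark that null-filiformity is equivalent to being one-generated indicates precisely this approach, so your argument matches the intended route.
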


\begin{de}
An $n$-dimensional algebra is called filiform if $\dim(\mathcal{A}^i)=n-i, \ 2\leq i \leq n$.
\end{de}

\begin{de} An $n$-dimensional associative algebra $\mathcal{A}$ is called quasi-filiform algebra if $\mathcal{A}^{n-2}\neq0$ and $\mathcal{A}^{n-1}=0$.
\end{de}

\begin{de} Given a nilpotent associative algebra $\mathcal{A}$, put
$\mathcal{A}_i=\mathcal{A}^i/\mathcal{A}^{i+1}, \ 1 \leq i\leq k-1$, and $\gr \mathcal{A} = \mathcal{A}_1 \oplus
\mathcal{A}_2\oplus\dots \oplus \mathcal{A}_{k}$. Then $[\mathcal{A}_i,\mathcal{A}_j]\subseteq \mathcal{A}_{i+j}$ and we
obtain the graded algebra $\gr \mathcal{A}$. If $\gr \mathcal{A}$ and $\mathcal{A}$ are isomorphic,
denoted by $\gr \mathcal{A}\cong \mathcal{A}$, we say that the algebra $\mathcal{A}$ is naturally
graded.
\end{de}

For any element $x$ of $\mathcal{A}$ we define the  left multiplication operator as
\[L_x \colon  \mathcal{A} \rightarrow \mathcal{A},  \quad z \mapsto xz, \ z\in\mathcal{A}.\]

Let us $x\in\mathcal{A}\setminus\mathcal{A}^2$ and for the nilpotent left multiplication operator $L_x$, define the decreasing sequence $C(x)=(n_1,n_2, \dots, n_k)$ that consists of the dimensions of the Jordan blocks of the operator $L_x$. Endow the set of these sequences with the lexicographic order, i.e. $C(x)=(n_1,n_2, \dots, n_k)\leq C(y)=(m_1,m_2, \dots, m_s)$ means that there is an $i\in\mathbb{N}$ such that $n_j=m_j$ for all $j<i$ and $n_i<m_i$.

\begin{de} The sequence $C(\mathcal{A})=\max_{x\in\mathcal{A}\setminus\mathcal{A}^2}C(x)$ is defined to be the characteristic sequence of the algebra $\mathcal{A}$.
\end{de}

\begin{de}
The left center of $\mathcal{A}$ is denoted by $Z^l(\mathcal{A})=\{x\in\mathcal{A} \mid x a=0 \ \text{for all} \ a\in\mathcal{A}\}$ and the right center of $\mathcal{A}$ is denoted by $Z^r(\mathcal{A})=\{x\in\mathcal{A} \mid a x=0 \ \text{for all} \ a\in\mathcal{A}\}$. The center of $\mathcal{A}$ is $Z(\mathcal{A})=Z^l(\mathcal{A})\cap Z^r(\mathcal{A})$.
\end{de}

\section{Filiform associative algebras of degree $p$} \label{S:fil}

Now we define filiform algebras of degree $p$.

\begin{de}
An $n$-dimensional associative algebra $\mathcal{A}$ is called filiform of degree $p$ if $\dim(\mathcal{A}^i)=n-p+1-i, \ 1\leq i \leq n-p+1$.
\end{de}


\begin{teo}\label{natpfil}
 Let $\mathcal{A}$ be a naturally graded filiform associative algebra of  dimension $n (n>p+2)$ of degree $p$  over a field $\mathbb{F}$ characteristic zero. Then, $\mathcal{A}$ isomorphic to $\mu_0^{n-p}\oplus\mathbb{F}^p$.

\end{teo}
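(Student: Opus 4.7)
The plan is to single out a distinguished generator $e_1 \in \mathcal{A}_1$ whose powers $e_1, e_1^2, \ldots, e_1^{n-p}$ give bases of the one-dimensional homogeneous pieces, and then modify a chosen complement of $\mathbb{F} e_1$ in $\mathcal{A}_1$ to turn it into an abelian ideal annihilating $\langle e_1, e_1^2, \ldots, e_1^{n-p}\rangle$.

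Fix generators $e_k$ of the one-dimensional pieces $\mathcal{A}_k$ ($k \geq 2$). Define the linear functionals $\phi_k \colon \mathcal{A}_1 \to \mathbb{F}$ for $k = 2, \ldots, n-p-1$ by $y\,e_k = \phi_k(y)\,e_{k+1}$, and the quadratic map $\psi_2 \colon \mathcal{A}_1 \to \mathbb{F}$ by $y^2 = \psi_2(y)\,e_2$. A short induction gives $e_1^{k+1} = \psi_2(e_1)\phi_2(e_1)\cdots\phi_k(e_1)\,e_{k+1}$, so I need an $e_1$ at which $\psi_2$ and every $\phi_k$ are non-zero. Over an infinite field this reduces to showing that each of these polynomial functions is not identically zero on $\mathcal{A}_1$, since the complement of finitely many proper Zariski closed subsets of a vector space over an infinite field is non-empty.

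Non-vanishing of $\phi_k$ is straightforward: if $\phi_k \equiv 0$, i.e.\ $\mathcal{A}_1\mathcal{A}_k = 0$, then for $i \geq 2$ one has $\mathcal{A}_i\mathcal{A}_{k+1-i} \subseteq \mathcal{A}_1(\mathcal{A}_{i-1}\mathcal{A}_{k+1-i}) \subseteq \mathcal{A}_1\mathcal{A}_k = 0$, hence $\mathcal{A}_{k+1} = \sum_i \mathcal{A}_i\mathcal{A}_{k+1-i} = 0$, contradicting $\dim\mathcal{A}_{k+1} = 1$. The main difficulty is the non-vanishing of $\psi_2$: if $y^2 = 0$ for all $y \in \mathcal{A}_1$, then polarization (in characteristic $\neq 2$) gives $xy = \omega(x,y)\,e_2$ with $\omega$ a non-zero skew-symmetric form; writing $y\,e_2 = \lambda(y)e_3$ and $e_2 y = \rho(y)e_3$, the identity $\omega(x,y)\rho(z) = \omega(y,z)\lambda(x)$ (from $(xy)z = x(yz)$), together with the non-degeneracy of $\omega$ on at least one pair, forces $\lambda \equiv \rho \equiv 0$; hence $\mathcal{A}_1 e_2 = e_2\mathcal{A}_1 = 0$, so $\mathcal{A}_3 = 0$, contradicting $n - p \geq 3$.

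With such $e_1$ in hand, set $e_k := e_1^k$ for $k \geq 2$, so that $e_ie_j = e_{i+j}$ on $\langle e_1, \ldots, e_{n-p}\rangle$. Complete $\{e_1\}$ to a basis $\{e_1, g_1, \ldots, g_p\}$ of $\mathcal{A}_1$ and write $e_1 g_j = \beta_j e_2$, $g_j e_1 = \gamma_j e_2$, $g_i g_j = \delta_{ij} e_2$. The associativity relations $(e_1 g_j) e_1 = e_1 (g_j e_1)$ and $(g_i g_j) e_1 = g_i (g_j e_1)$ force $\beta_j = \gamma_j$ and $\delta_{ij} = \beta_i \beta_j$, while the analogous computations give $g_j e_k = (g_j e_1) e_{k-1} = \gamma_j e_{k+1}$ and $e_k g_j = e_{k-1}(e_1 g_j) = \beta_j e_{k+1}$ for $k \geq 2$. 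Replacing $g_j$ by $\tilde g_j := g_j - \beta_j e_1$ then annihilates every product $e_1 \tilde g_j,\ \tilde g_j e_1,\ \tilde g_j e_k,\ e_k \tilde g_j,\ \tilde g_i \tilde g_j$, so $\mathcal{A}$ decomposes as $\langle e_1, \ldots, e_{n-p}\rangle \oplus \langle \tilde g_1, \ldots, \tilde g_p\rangle \cong \mu_0^{n-p} \oplus \mathbb{F}^p$.
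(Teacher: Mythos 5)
Your proof is correct, and while its endgame coincides with the paper's, the way you produce the generator of the ``spine'' is genuinely different. The paper fixes a graded basis at the outset and argues on structure constants: if some degree-one element has nonzero square (its Case 1), associativity together with the grading forces each coefficient in $e_1e_k=\delta_k e_{k+1}$ to be nonzero (otherwise $\mathcal{A}_{k+1}=0$), rescaling gives $e_ie_j=e_{i+j}$, and the complement is then corrected by $f_j\mapsto f_j-\alpha_j e_1$, which is exactly your $\tilde g_j=g_j-\beta_j e_1$; if every square vanishes (its Case 2), the product on $\mathcal{A}_1$ becomes skew-symmetric and an explicit computation gives $\mathcal{A}^3=0$, impossible for $n>p+2$. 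You compress the case split and the degree-by-degree normalization into a genericity statement: each $\phi_k$ is a nonzero linear form because $\mathcal{A}_1\mathcal{A}_k=0$ would force $\mathcal{A}_{k+1}=0$ (this silently uses $\mathcal{A}_i=\mathcal{A}_1\mathcal{A}_{i-1}$, immediate from associativity of the graded algebra, but worth one explicit line), $\psi_2$ is a nonzero quadratic form by your polarization argument --- which is precisely the content of the paper's Case 2 --- and Zariski density over the infinite field $\mathbb{F}$ gives a single $e_1$ with $e_1^k\neq0$ for all $k\leq n-p$. One step you assert rather than prove is $\lambda\equiv\rho\equiv0$: put $z=y$ in $\omega(x,y)\rho(z)=\omega(y,z)\lambda(x)$ to see that $\rho$ vanishes off the radical of $\omega$, hence everywhere (a linear form vanishing on the complement of a proper subspace vanishes identically), and then $\lambda\equiv0$ follows by choosing a pair with $\omega(y,z)\neq0$; this is routine, not a gap. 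What your route buys is a shorter, coordinate-light argument with no case analysis and no repeated rescalings; the price is an explicit appeal to $\mathbb{F}$ being infinite, which is harmless under the characteristic-zero hypothesis.
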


\begin{proof}Let $\mathcal{A}$ be a naturally graded filiform associative algebra of dimension $n$ of degree $p$  and let $\{e_1, e_2,\dots, e_{n-p}, f_1,f_2, \dots, f_p \}$ be a basis for $\mathcal{A}$ such that $\{e_1,f_1,f_2,\dots,f_p\}\subset \mathcal{A}_1, e_2\in\mathcal{A}_2,\dots, e_{n-p}\in\mathcal{A}_{n-p}$
(such a basis can be chosen always). Moreover $\mathcal{A}_i\mathcal{A}_j\subseteq A_{i+j}$.

\textbf{Case 1.} Let $e_1e_1\neq0$. Then we can assume $e_1e_1=e_2$. We denote
\[e_1f_i=\alpha_ie_2, \quad f_ie_1=\beta_ie_2, \quad f_if_j=\gamma_{ij}e_2, \quad e_1e_2=\delta_1e_3, \quad 1\leq i,j\leq p.\]

The identity $e_1(e_1e_1)=(e_1e_1)e_1$
implies
$e_1e_2=e_2e_1=\delta_1e_3$.

Considering
\begin{align*}
e_2f_i & =(e_1e_1)f_i=e_1(e_1f_i)=\alpha_ie_1e_2=\alpha_i\delta_1e_3, \\
f_ie_2 & =f_i(e_1e_1)=(f_ie_1)e_1=\beta_ie_1e_2=\beta_i\delta_1e_3,
\end{align*}
we obtain $e_2f_i=\alpha_i\delta_1e_3$ and $f_ie_2=\beta_i\delta_1e_3$ and it  follows $\delta_1\neq0$ otherwise $e_3\notin \mathcal{A}^3$.

Consider
\[e_1(f_ie_1)=\beta_ie_1e_2=\beta_i\delta_1e_3.\]
On the other hand
\[e_1(f_ie_1)=(e_1f_i)e_1=\alpha_ie_2e_1=\alpha_i\delta_1e_3.\]
Consequently, $\beta_i=\alpha_i$, i.e. $e_1f_i=f_ie_1=\alpha_ie_2$ where $1\leq i\leq p$.

The equality $e_1(f_if_j)=(e_1f_i)f_j$  implies $\gamma_{ij}=\alpha_i\alpha_j$, and hence
\[f_if_j=\alpha_i\alpha_je_2, \quad 1\leq i,j\leq p.\]

So, we obtain that
\begin{align*}
e_1e_1 & =e_2, \qquad e_1f_i=f_ie_1=\alpha_ie_2, \qquad f_if_j=\alpha_i\alpha_je_2,\\
e_1e_2 &=e_2e_1=\delta_1e_3, \qquad e_2f_i=f_ie_2=\alpha_i\delta_1e_3,
\end{align*}
where $1\leq i,j\leq p$.

Let us take the change of basis as follows:
\[e_1^\prime=e_1, \quad e_2^\prime=e_2, \quad e_3^\prime=\delta_1e_3, \quad e_i^\prime=e_i, \quad f_j^\prime=-\alpha_je_1+f_j,\]
where $4\leq i\leq n-p$ and $1\leq j\leq p$.

Then
\begin{align*}
e_1^\prime e_1^\prime &=e_2^\prime, \\
e_1^\prime e_2^\prime &=e_2^\prime e_1^\prime=e_3^\prime,\\
f_i^\prime f_j^\prime & =(-\alpha_ie_1+f_i)(-\alpha_je_1+f_j)=\alpha_i\alpha_je_2-\alpha_i\alpha_je_2-\alpha_i\alpha_je_2+\alpha_i\alpha_je_2=0, \\
e_1^\prime f_i^\prime &=e_1(-\alpha_ie_1+f_i)=-\alpha_ie_2+\alpha_ie_2=0, \\
e_2^\prime f_i^\prime &=e_2(-\alpha_ie_1+f_i)=-\alpha_i\delta_1e_3+\alpha_i\delta_1e_3=0,
\end{align*}
where $1\leq i,j\leq p$.

 We obtain the  multiplication table:
 \[e_1e_1=e_2, \quad e_1e_2=e_2e_1=e_3, \quad e_1f_i=f_ie_1=e_2f_i=f_ie_2=f_if_j=0,\]
 where $1\leq i,j\leq p$.

Let us denote that $e_1e_3=\delta_2e_4$. Then from the equalities
\begin{align*}
e_1x_3&=e_1(e_2e_1)=(e_1e_2)e_1=e_3e_1, \qquad f_ie_3=d_i(e_1e_2)=(f_ie_1)e_2=0,\\
e_3f_i&=(e_2e_1)f_i=e_2(e_1f_i)=0,
\end{align*}
we obtain $f_ie_3=e_3f_i=0, \ 1\leq i\leq p$ and $e_1e_3=e_3e_1=\delta_2e_4$. It follows $\delta_2\neq0$ else $e_4\notin \mathcal{A}^4$. Note that taking the change $e_4^\prime=\delta_2e_4$ we can assume that
\[e_1e_3=e_3e_1=e_4, \quad f_ie_3=e_3f_i=0, \quad 1\leq i\leq p.\]
Analogously by induction we can obtain that
\[e_1e_i=e_ie_1=e_{i+1}, \quad f_je_i=e_if_j=0, \quad 4\leq i\leq n-p, \quad 1\leq j\leq p.\]
 Moreover, we have $e_{n-p}\in Z(\mathcal{A})$.

From the chain of equalities
\begin{align*}
e_ie_j&=e_i(e_1e_{j-1})=(e_ie_1)e_{j-1}=e_{i+1}e_{j-1}=\cdots\\
&=e_{i+j-2}e_2=e_{i+j-2}(e_1e_1)=(e_{i+j-2}e_1)e_1=e_{i+j},
\end{align*}
where $2\leq i+j\leq n-p$, it is easy to check $e_ie_j=0$ with $i+j> n-p$.

Hence we obtain the algebra with the multiplications table is:
\[\mu: \quad e_ie_j=e_{i+j}, \quad 2\leq i+j\leq n-p.\]

\textbf{Case 2.}  Let $e_1f_1\neq0$. Then without loss of generality, and by virtue of the symmetry  $e_1e_1=f_if_i=0$ where $1\leq i\leq p$,
 we may assume that $e_1f_1=e_2$ and denote parameter  $ f_1e_1=\gamma e_2$.
Otherwise, we will get into the Case 1. Consider the next change basis $e_1^\prime=e_1+f_1$ then $e_1^\prime e_1^\prime=(1+\gamma)e_2$,
follows $\gamma=-1$ otherwise $e_1^\prime e_1^\prime\neq0$ and it is Case 1.
 So,  we have \[e_1f_1=-f_1e_1=e_2, \quad e_1e_1=f_if_i=0, \quad 1\leq i\leq p.\] Denote by $e_1f_i=\alpha_i e_2$ and $f_ie_1=\beta_i e_2$, where $2\leq i\leq p$.

Let us consider
\begin{align*}
e_1e_2 &= e_1(e_1f_1)=(e_1e_1)f_1=0, && e_2e_1=-(f_1e_1)e_1=-f_1(e_1e_1)=0,\\
f_1e_2 &= f_1(e_1f_1)=(f_1e_1)f_1=-e_2f_1,&& e_2f_1=(e_1f_1)f_1=e_1(f_1f_1)=0,\\
f_ie_2 &= f_i(e_1f_1)=(f_ie_1)f_1=\beta_ie_2f_1=0, &&  \\
e_2f_i&=-(f_1e_1)f_i=-f_1(e_1f_i)=-\alpha_i f_1e_2=0,
\end{align*}
where $2\leq i\leq p$.

Hence we have
\[e_1e_2=e_2e_1=e_2f_i=f_ie_2=0, \quad 1\leq i\leq p,\]
it follows $x_3\notin \mathcal{A}^3$, it is contradiction. This case exists only if $\dim\mathcal{A}=p+2$ and the nonzero multiplications are
 \[e_1f_1=-f_1e_1=e_2.\]
\end{proof}

\begin{cons} In every naturally graded filiform associative algebra of dimension $p+2$ of degree $p$  over a field $\mathbb{F}$ of characteristic zero,
 there are two non-isomorphic algebras with a basis $\{e_1,e_2,f_1,f_2,\dots,f_p\}$ and the following
multiplications:
\begin{align*}
\mu_0^2\oplus\mathbb{F}^p:& \quad e_1e_1=e_2, \\
\mathcal{H}_1\oplus\mathbb{F}^{p-1}:& \quad e_1f_1=-f_1e_1=e_2,
\end{align*}
where $\mathcal{H}_1$ is the three-dimensional Heisenberg algebra.
\end{cons}

\begin{teo}\label{pfil}
Every filiform associative algebra of  dimension $n>p+2$  of degree $p$  over a field $\mathbb{F}$ of characteristic zero is isomorphic to the algebra
\[\mu^\prime:\left\{\begin{array}{ll}
  e_ie_j=e_{i+j},  & 2\leq i+j\leq n-p, \\
 e_1f_1=\alpha e_{n-p},  & \alpha\in\{0,1\}, \\
  f_if_j=\beta_{i,j}e_{n-p}, & 1\leq i,j\leq p,\\
  \end{array}\right.\]
where $\beta_{i,j}\in\mathbb{F}$, and the other products vanish.
\end{teo}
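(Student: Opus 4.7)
The plan is to mimic the structure of the proof of Theorem \ref{natpfil}, but to work in the filtered algebra $\mathcal{A}$ rather than in a graded one. The associated graded $\gr\mathcal{A}$ is a naturally graded filiform associative algebra of degree $p$ and dimension $n>p+2$, so by Theorem \ref{natpfil} we have $\gr\mathcal{A}\cong\mu_0^{n-p}\oplus\mathbb{F}^p$. First I would choose lifts $e_1\in\mathcal{A}\setminus\mathcal{A}^2$ and $f_1,\dots,f_p\in\mathcal{A}\setminus\mathcal{A}^2$ of the distinguished generators of $\gr\mathcal{A}$; this guarantees $e_1e_1\notin\mathcal{A}^3$, so the ``Case 2'' dichotomy from the proof of Theorem \ref{natpfil} (which forced $n=p+2$) is excluded at the outset.

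I would then set $e_2:=e_1e_1$ and inductively $e_{i+1}:=e_1e_i$ for $2\le i\le n-p-1$. Because of the shape of $\gr\mathcal{A}$, each $e_{i+1}$ belongs to $\mathcal{A}^{i+1}\setminus\mathcal{A}^{i+2}$, so $\{e_1,\dots,e_{n-p},f_1,\dots,f_p\}$ is a basis of $\mathcal{A}$; a short induction using associativity then yields $e_ie_j=e_{i+j}$ for $2\le i+j\le n-p$, together with $e_ie_j=0$ for $i+j>n-p$ since such products lie in $\mathcal{A}^{n-p+1}=0$.

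For the mixed products I would expand $e_1f_i=\sum_{k=2}^{n-p}\alpha_{i,k}e_k$ and $f_ie_1=\sum_{k=2}^{n-p}\beta_{i,k}e_k$, observe that associativity $e_1(f_ie_1)=(e_1f_i)e_1$ forces $\alpha_{i,k}=\beta_{i,k}$ for $k\le n-p-1$, and then replace each $f_i$ by $f_i-\sum_{k=2}^{n-p}\beta_{i,k}e_{k-1}$. This kills $f_ie_1$ entirely and leaves $e_1f_i=\alpha_ie_{n-p}$ for a single scalar $\alpha_i$. A linear change among the $f$'s then collects the nonzero $\alpha_i$'s into a single coordinate, giving $e_1f_1=\alpha e_{n-p}$ and $e_1f_j=0$ for $j\ge 2$, and a final rescaling of $f_1$ brings $\alpha$ into $\{0,1\}$.

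Once these normalizations are in place, for $k\ge 2$ I would invoke $e_kf_i=(e_{k-1}e_1)f_i=e_{k-1}(e_1f_i)$, which lies in $\mathcal{A}^{(k-1)+(n-p)}=0$, and similarly $f_ie_k=f_i(e_1e_{k-1})=(f_ie_1)e_{k-1}=0$. For the products $f_if_j$, writing $f_if_j=\sum_{k=2}^{n-p}\gamma_{ij,k}e_k$ and applying $e_1(f_if_j)=(e_1f_i)f_j$ shows $e_1(f_if_j)=0$ (because $e_{n-p}f_j\in\mathcal{A}^{n-p+1}=0$ when $i=1$, and $e_1f_i=0$ when $i\ge 2$), which forces $\gamma_{ij,k}=0$ for every $k<n-p$, so $f_if_j=\beta_{i,j}e_{n-p}$ as claimed. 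The delicate point of the argument is the combined normalization of $e_1f_i$ and $f_ie_1$: the chosen adjustment of $f_i$ must annihilate $f_ie_1$ without reintroducing lower-order terms into $e_1f_i$, and this is possible precisely because of the identity $\alpha_{i,k}=\beta_{i,k}$ for $k\le n-p-1$.
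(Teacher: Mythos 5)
Your proposal is correct and follows essentially the same route as the paper's own proof: both invoke Theorem~\ref{natpfil} to reduce to the graded law plus higher-order terms, redefine $e_k=e_1e_{k-1}$, use the associativity identity $e_1(f_ie_1)=(e_1f_i)e_1$ to match the coefficients of $e_1f_i$ and $f_ie_1$, adjust $f_i$ by $\sum_k \beta_{i,k}e_{k-1}$ to kill $f_ie_1$ and reduce $e_1f_i$ to a multiple of $e_{n-p}$, collect and rescale the $\alpha_i$, and finish with the same associativity computations for $f_if_j$, $e_kf_i$, $f_ie_k$ and $e_ie_j$. The only differences (working with lifts of the graded basis instead of the decomposition $\mu+\mu_1$, starting the expansions at $e_2$ rather than $e_3$, and using left instead of right multiplication by $e_1$ to clean up $f_if_j$) are cosmetic.
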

\begin{proof}
By Theorem~\ref{natpfil} every $n$-dimensional filiform associative algebras of degree $p$ over a field $\mathbb{F}$ of characteristic zero is isomorphic to the algebra $\mu^\prime\cong \mu+\mu_1$ where \[\mu_1(e_ie_j)\in \lin(e_{i+j+1},\dots,e_{n-p}),\qquad \mu_1(f_kf_s)\in \lin(e_{3},\dots,e_{n-p}),\]
\[\mu_1(e_tf_k),\mu_1(f_ke_t)\in \lin(e_{t+2},\dots,e_{n-p})\]  for $2\leq i+j\leq n-p-1, \ 1\leq k,s\leq p, \ 1\leq t\leq n-p$. It is known that $e_{n-p}\in Z(\mathcal{A})$.

We have in $\mu^\prime$
\[ e_1e_k=e_{k+1}+\sum\limits_{s=k+2}^{n-p}\beta_{k,s}e_s,  \quad 1\leq k\leq n-p-1. \]

Let us take the change of basis in the following form:
\[e_1^\prime=e_1, \quad  e_k^\prime=e^\prime_1e^\prime_{k-1}, \quad 2\leq k\leq n-p, \quad f_s^\prime=f_s, \quad 1\leq s\leq p.\]
Thus, we obtain the following  multiplication table:
\[e_1e_k=e_{k+1}, \quad 1\leq k\leq n-3.\]

From the equality $e_1e_2=e_1(e_1e_1)=(e_1e_1)e_1=e_2e_1$
we derive $e_1e_2=e_2e_1$.

Now we will prove the following equalities by an induction on $k$:
\begin{equation}\label{abc}
e_1e_k=e_ke_1, \quad 2\leq k\leq n-p-1
\end{equation}
Obviously, the equality holds for $k=2$. Let us assume that the equality \eqref{abc} holds for $2 < k < n-p-1$, and we
shall prove it for $k+1$:
\[e_1e_{k+1}=e_1(e_1e_k)=e_1(e_ke_1)=(e_1e_k)e_1=e_{k+1}e_1;\]
so the induction proves the equalities \eqref{abc} for any $k, \ 2\leq k\leq n-p-1$.

Let us introduce the notations:
\[e_1f_i=\sum\limits_{s=3}^{n-p-1}\gamma_{is}e_s+\alpha_ie_{n-p}, \quad  f_ie_1=\sum\limits_{s=3}^{n-p}\delta_{is}e_s\]
where $1\leq i\leq p$.

From the equalities
\begin{align*}
\sum\limits_{s=3}^{n-p-1}\delta_{is}e_{s+1}& =e_1(\sum\limits_{s=3}^{n-p}\delta_{is}e_s)=e_1(f_ie_1)=
(e_1f_i)e_1=(\sum\limits_{s=3}^{n-p-1}\gamma_{is}e_s+\alpha_ie_{n-p})e_1\\
&=\sum\limits_{s=3}^{n-p-1}\gamma_{is}e_{s+1},
\end{align*}
we get $\delta_{is}=\gamma_{is}$ where $3\leq s\leq n-p-1$ and $1\leq i\leq p$.

Note that taking the change of basis
\[f_i^\prime=f_i-\sum\limits_{k=3}^{n-p}\delta_{ik}e_{k-1},\]
we can assume that
\[e_1f_i=\alpha_ie_{n-p}, \quad f_ie_1=0, \quad  1\leq i\leq p.\]

Let us suppose that $\alpha_k\neq0$ for some $k$. Then by changing
\[f_i^\prime=f_i-\frac{\alpha_i}{\alpha_k}f_k, \ \ i\neq k, \qquad f_k^\prime=\frac{1}{\alpha_k}f_k,\]
we obtain $e_1f_k=e_{n-p}$ and $e_1f_i=0$ for $i\neq k$. Hence,  without loss of generality we can assume
\[e_1f_1=e_{n-p}, \qquad e_1f_i=0, \quad 2\leq i\leq p.\]

Let us denote
\[f_if_j=\sum\limits_{k=3}^{n-p-1}\eta_{i,j,k}e_k+\beta_{i,j}e_{n-p}, \quad 1\leq i,j\leq p.\]

The equalities
\[0=f_i(f_je_1)=(f_if_j)e_1=\sum\limits_{k=3}^{n-p}\eta_{i,j,k}e_ke_1=\sum\limits_{k=4}^{n-p}\eta_{i,j,k-1}e_{k}\]
yield  $\eta_{i,j,k}=0$, and it follows
\[f_if_j=\beta_{i,j}e_{n-p}, \quad 1\leq i,j \leq p.\]

Considering the identities
\begin{align*}
e_if_j&=(e_{i-1}e_1)f_j=e_{i-1}(e_1f_j)=\alpha_ie_{i-1}e_{n-p}=0, \\
f_je_i& =f_j(e_1e_{i-1})=(f_je_1)e_{i-1}=0,
\end{align*}
we can deduce that \[e_if_j=f_je_i=0, \quad 2\leq i\leq n-p, \quad 1\leq j\leq p.\]

Finally from the following chain of equalities
\begin{align*}
e_ie_j&=e_i(e_1e_{j-1})=(e_ie_1)e_{j-1}=e_{i+1}e_{j-1}=\dots=e_{i+j-2}e_2=e_{i+j-2}(e_1e_1)\\
&=(e_{i+j-2}e_1)e_1=e_{i+j},
\end{align*}
we derive that
\[e_ie_j=e_{i+j}, \quad 2\leq i+j\leq n-p.\]
\end{proof}

\begin{pr}\label{fili}
  An $n$-dimensional associative algebra $\mathcal{A}$ is filiform if and only if the nilindex of $\mathcal{A}$ is equal $n$.
\end{pr}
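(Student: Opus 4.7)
The plan is to prove the two implications separately. The forward direction is immediate from the definition of filiform: if $\dim \mathcal{A}^i = n-i$ for all $2 \leq i \leq n$, then $\mathcal{A}^n = 0$ while $\dim \mathcal{A}^{n-1} = 1 \neq 0$, so the nilindex is exactly $n$.

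For the converse, assume $\mathcal{A}^n = 0$ and $\mathcal{A}^{n-1} \neq 0$. I would first observe that by associativity the recursion collapses to $\mathcal{A}^{i+1} = \mathcal{A}\cdot \mathcal{A}^i$, and then run a Nakayama-style argument: if $\mathcal{A}^{i+1} = \mathcal{A}^i$ then iterating gives $\mathcal{A}^i = \mathcal{A}^k\mathcal{A}^i \subseteq \mathcal{A}^{i+k}$ for every $k$, which vanishes by nilpotency. Hence all inclusions in
$$\mathcal{A} = \mathcal{A}^1 \supsetneq \mathcal{A}^2 \supsetneq \cdots \supsetneq \mathcal{A}^{n-1} \supsetneq \mathcal{A}^n = 0$$
are strict, so $\dim \mathcal{A}^1, \dim \mathcal{A}^2, \dots, \dim \mathcal{A}^n$ are $n$ distinct integers in $\{0,1,\dots,n\}$.

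The heart of the argument, and the main obstacle, is then to pin down where the missing dimension lies, i.e.\ to verify $\dim \mathcal{A}^2 \leq n-2$. I would argue by contradiction: if $\dim \mathcal{A}^2 = n-1$, then $\mathcal{A}/\mathcal{A}^2$ is one-dimensional, so $\mathcal{A}$ is generated by a single element; by the characterization of null-filiformity recalled just before the citation of \cite[Proposition 5.3]{MO}, this forces $\mathcal{A}$ to be null-filiform, hence of nilindex $n+1$, contrary to the assumption. Once $\dim \mathcal{A}^2 \leq n-2$ is established, the $n-1$ strictly decreasing values $\dim \mathcal{A}^2 > \dim \mathcal{A}^3 > \cdots > \dim \mathcal{A}^n = 0$ must exhaust the set $\{0,1,\dots,n-2\}$ of size $n-1$, and this pigeonhole forces $\dim \mathcal{A}^i = n-i$ for $2 \leq i \leq n$, so $\mathcal{A}$ is filiform.
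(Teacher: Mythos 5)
Your argument is correct, and its skeleton is the same as the paper's: in the converse direction both proofs reduce everything to showing that the strictly decreasing chain $\mathcal{A}\supsetneq\mathcal{A}^2\supsetneq\dots\supsetneq\mathcal{A}^{n}=0$ must have $\dim\mathcal{A}^2=n-2$, after which the dimension count is forced. The difference lies in how the possibility $\dim\mathcal{A}^2=n-1$ is excluded. The paper does it by hand: in that case the drop of $2$ in dimension occurs at some step $\mathcal{A}^i\supsetneq\mathcal{A}^{i+1}$ with $i\geq 2$, and choosing basis vectors $e_i,e_{i+1}\in\mathcal{A}^i\setminus\mathcal{A}^{i+1}$, both can be written as the $i$-fold product of $e_1$ plus elements of $\mathcal{A}^{i+1}$, so $e_i-e_{i+1}\in\mathcal{A}^{i+1}$, a contradiction; this explicit generator argument is also what allows the authors to read Proposition~\ref{fili1} off ``from the proof''. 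You instead note that $\dim\mathcal{A}/\mathcal{A}^2=1$ makes $\mathcal{A}$ one-generated and then invoke the remark in the preliminaries that a one-generated nilpotent algebra is null-filiform, getting an immediate contradiction on the nilindex ($n+1$ versus $n$). This is legitimate and arguably cleaner, but it shifts the substance onto two facts you do not prove: that remark itself (stated in the paper without proof), and the step that codimension-one $\mathcal{A}^2$ implies one-generatedness. Both follow from the same Nakayama-style iteration you already set up for strictness of the chain (for the subalgebra $B$ generated by any $x\notin\mathcal{A}^2$ one gets $\mathcal{A}=B+\mathcal{A}^k$ for all $k$, hence $\mathcal{A}=B$; and every product of length $i$ is a scalar multiple of $x^i$ modulo $\mathcal{A}^{i+1}$, so $\dim\mathcal{A}^i/\mathcal{A}^{i+1}\leq 1$), so a sentence to that effect would make your proof self-contained and essentially reconstruct the paper's basis computation. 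Your explicit verification that all inclusions in the chain are strict is a detail the paper passes over silently, and is a small plus of your write-up.
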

\begin{proof} If $\mathcal{A}$ is filiform then by definition implies that the nilindex of $\mathcal{A}$ is equal $n$. Let us suppose that the nilindex of $\mathcal{A}$ is equal $n$.
 It follows $\mathcal{A}^n=0$ and $\mathcal{A}^{n-1}\neq0$. Then we obtain a decreasing chain of ideals $\mathcal{A}\supset\mathcal{A}^2\supset\mathcal{A}^3\supset\dots\supset\mathcal{A}^{n-1}\supset\mathcal{A}^n=\{0\}$.
  Obviously, there are two possibilities $\dim\mathcal{A}^2=n-1$ or $\dim\mathcal{A}^2=n-2$. Assume that $\dim\mathcal{A}^2=n-1$.
   Choose a basis $\{e_1,e_2,\dots,e_n\}$ such that
    $e_1\in \mathcal{A}\setminus\mathcal{A}^2, \dots, \{e_i,e_{i+1}\}\subset \mathcal{A}^i\setminus\mathcal{A}^{i+1}, e_{i+2}\in \mathcal{A}^{i+1}\setminus\mathcal{A}^{i+2}, \dots, e_{n}\in \mathcal{A}^{n-1}$, where $i\neq1$. Such a basis can be chosen always. We may assume that
\[e_i=\underbrace{(((e_1e_1)e_1)\cdots e_1)}_i+(\ast), \quad e_{i+1}=\underbrace{(((e_1e_1)e_1)\cdots e_1)}_i+(\ast\ast)\]
where $(\ast),(\ast\ast)\in\mathcal{A}^{i+1}$. Then $e_i-e_{i+1}\in\mathcal{A}^{i+1}$. It follows we obtain a contradiction with the assumption that $\dim\mathcal{A}^i\setminus\mathcal{A}^{i+1}=2$. Therefore $\dim\mathcal{A}^i\setminus\mathcal{A}^{i+1}=1$ where $2\leq i\leq n-1$ and $\dim\mathcal{A}^2=n-2$. Thus, $\dim\mathcal{A}^i=n-i$ where $2\leq i\leq n$, i.e. $\mathcal{A}$ is a filiform associative algebra.
\end{proof}

The proof of the previous proposition implies the next proposition.
\begin{pr}\label{fili1}
Let $\mathcal{A}$ is $n$-dimensional associative algebra and $\dim\mathcal{A}^2=n-1$. Then $\mathcal{A}$ is a null-filiform associative algebra.
\end{pr}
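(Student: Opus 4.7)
The plan is to treat this as an almost immediate consequence of the observation already made inside the proof of Proposition~\ref{fili}. In the ambient setting of Section~\ref{S:prel}, $\mathcal{A}$ is understood to be nilpotent, so I would first make that explicit; we may then freely use that the descending chain $\mathcal{A}\supseteq\mathcal{A}^2\supseteq\mathcal{A}^3\supseteq\cdots$ eventually reaches $0$ and is strictly decreasing while nonzero.

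The hypothesis $\dim\mathcal{A}^2=n-1$ is equivalent to $\dim(\mathcal{A}/\mathcal{A}^2)=1$, so I would fix $e_1\in\mathcal{A}\setminus\mathcal{A}^2$ with $\mathcal{A}=\mathbb{F}e_1+\mathcal{A}^2$. The central step is to prove by induction on $i\ge 1$ that
\[\mathcal{A}^i=\mathbb{F}e_1^i+\mathcal{A}^{i+1}.\]
This is exactly the identity the authors used in the proof of Proposition~\ref{fili} to write both $e_i$ and $e_{i+1}$ as $e_1^i$ plus an element of $\mathcal{A}^{i+1}$. The induction step follows by expanding any $i$-fold product of elements from $\mathbb{F}e_1+\mathcal{A}^2$: associativity leaves a single scalar multiple of $e_1^i$, while every other summand contains at least one factor from $\mathcal{A}^2$ and hence lies in $\mathcal{A}^{i+1}$. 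Consequently $\dim(\mathcal{A}^i/\mathcal{A}^{i+1})\le 1$ for every $i\ge 1$.

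Combining this bound with nilpotency, every drop in the chain must be exactly $1$ until the chain vanishes, yielding $\dim\mathcal{A}^i=n+1-i$ for $1\le i\le n+1$, which is the definition of null-filiform.

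There is no serious obstacle, since the statement is, as the authors flag, a direct rereading of the computation that already appears in the proof of the previous proposition; the only point requiring a moment of care is making explicit that the ``represent modulo $\mathcal{A}^{i+1}$ by a multiple of $e_1^i$'' trick works uniformly for every $i\ge 1$, not only at the specific index where the previous proof extracted its contradiction.
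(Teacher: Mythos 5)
Your argument is correct and follows essentially the same route the paper intends: the statement is left as a consequence of the proof of Proposition~\ref{fili}, whose key step is precisely your observation that, modulo $\mathcal{A}^{i+1}$, every element of $\mathcal{A}^i$ is a scalar multiple of the $i$-fold product of a fixed generator $e_1\in\mathcal{A}\setminus\mathcal{A}^2$, so each quotient $\mathcal{A}^i/\mathcal{A}^{i+1}$ has dimension at most one and nilpotency forces $\dim\mathcal{A}^i=n+1-i$. You also rightly make explicit the tacit nilpotency hypothesis (without it the claim fails, e.g.\ for an algebra with an idempotent), which the paper leaves to context.
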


Now we will classify the $n$-dimensional complex filiform associative algebras.

\begin{teo}\label{fili2}
For $n>3$, every $n$-dimensional complex filiform associative algebra is isomorphic to one of the next pairwise non-isomorphic algebras with basis $\{e_1,e_2,\dots,e_n\}$:
\[\begin{array}{llll}
\mu_{1,1}^n: & e_ie_j=e_{i+j},  & & \\
\mu_{1,2}^n: & e_ie_j=e_{i+j},  & e_ne_n=e_{n-1}, & \\
\mu_{1,3}^n: & e_ie_j=e_{i+j},  & e_1e_n=e_{n-1}, & \\
\mu_{1,4}^n: & e_ie_j=e_{i+j},  & e_1e_n=e_{n-1}, & e_ne_n=e_{n-1}
\end{array}\]
where $2\leq i+j\leq n-1$.
\end{teo}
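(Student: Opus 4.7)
The plan is to apply Theorem~\ref{pfil} with $p=1$ to reduce every filiform associative algebra to a normal form depending on only two scalar parameters $\alpha,\beta$, then to normalise these parameters to a finite list of four cases by rescalings of the generators $e_1$ and $e_n$, and finally to verify pairwise non-isomorphism of the four resulting algebras by comparing invariants such as commutativity and the dimensions of the one-sided centres.

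First, applying Theorem~\ref{pfil} with $p=1$ and relabelling $f_1$ as $e_n$, every $n$-dimensional complex filiform associative algebra with $n>3$ is isomorphic to one on basis $\{e_1,\dots,e_n\}$ whose only non-vanishing products are $e_ie_j=e_{i+j}$ for $2\le i+j\le n-1$, $e_1e_n=\alpha e_{n-1}$ with $\alpha\in\{0,1\}$, and $e_ne_n=\beta e_{n-1}$ with $\beta\in\mathbb{C}$; in particular $e_ne_1=0$ and $e_ie_n=e_ne_i=0$ for $2\le i\le n-1$. The classification therefore reduces to normalising the pair $(\alpha,\beta)$.

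Next, I would consider the basis change $e_1'=\mu e_1$, $e_n'=\lambda e_n$ with $\mu,\lambda\in\mathbb{C}^{\ast}$, extended by $e_k'=(e_1')^k=\mu^k e_k$ for $2\le k\le n-1$. The chain relations $e_i'e_j'=e_{i+j}'$ are preserved, and the remaining parameters transform as $\alpha'=\lambda\alpha/\mu^{n-2}$ and $\beta'=\lambda^2\beta/\mu^{n-1}$. When $\alpha=0$ I set $\mu=1$ and choose $\lambda$ with $\lambda^2=1/\beta$ (whenever $\beta\ne 0$) to normalise $\beta$ to $0$ or $1$, producing $\mu_{1,1}^n$ or $\mu_{1,2}^n$ respectively. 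When $\alpha=1$ I impose $\lambda=\mu^{n-2}$ to preserve $\alpha'=1$; the second parameter then becomes $\beta'=\mu^{n-3}\beta$, and since $n>3$ the equation $\mu^{n-3}=1/\beta$ has a non-zero solution in $\mathbb{C}$ whenever $\beta\ne 0$, normalising $\beta$ to $0$ or $1$ and giving $\mu_{1,3}^n$ or $\mu_{1,4}^n$.

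Finally, for the non-isomorphism part I would separate the four algebras by easy invariants. The algebras $\mu_{1,1}^n$ and $\mu_{1,2}^n$ are commutative, whereas $\mu_{1,3}^n$ and $\mu_{1,4}^n$ are not, since $e_1e_n-e_ne_1=e_{n-1}\ne 0$ in the latter two; within the commutative pair a direct computation yields $\dim Z(\mu_{1,1}^n)=2$ but $\dim Z(\mu_{1,2}^n)=1$, and within the non-commutative pair the same type of computation gives $\dim Z^l(\mu_{1,3}^n)=2$ but $\dim Z^l(\mu_{1,4}^n)=1$. The main obstacle will be the normalisation in the case $\alpha=1$, $\beta\ne 0$: this is precisely where the hypothesis $n>3$ is essential, via solvability of $\mu^{n-3}=1/\beta$ in $\mathbb{C}^{\ast}$; the step would fail for $n=3$, consistent with the theorem's exclusion of that dimension.
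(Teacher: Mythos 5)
Your proposal is correct, and its skeleton coincides with the paper's: reduce via Theorem~\ref{pfil} with $p=1$ (relabelling $f_1$ as $e_n$) to the two-parameter family $e_ie_j=e_{i+j}$, $e_1e_n=\alpha e_{n-1}$, $e_ne_n=\beta e_{n-1}$, and then normalise $(\alpha,\beta)$; your transformation formulas $\alpha'=\lambda\alpha/\mu^{n-2}$, $\beta'=\lambda^2\beta/\mu^{n-1}$ are exactly the paper's $\alpha'=\alpha B_n/A_1^{n-2}$, $\beta'=\beta B_n^2/A_1^{n-1}$ restricted to the diagonal rescaling $A_1=\mu$, $B_n=\lambda$. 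The genuine difference lies in how pairwise non-isomorphism is handled. The paper carries out the \emph{general} change of generators $e_1'=\sum A_ke_k$, $e_n'=\sum B_ke_k$, derives the constraints $B_k=0$ ($1\leq k\leq n-3$), and reads off from the resulting formulas that the vanishing or non-vanishing of $\alpha$ and $\beta$ is preserved by every isomorphism, so normalisation and separation come from one computation. You instead use only the scaling subgroup for the normalisation (which indeed suffices, including the solvability of $\mu^{n-3}=1/\beta$ for $n>3$) and separate the four algebras by structural invariants: commutativity distinguishes $\{\mu_{1,1}^n,\mu_{1,2}^n\}$ from $\{\mu_{1,3}^n,\mu_{1,4}^n\}$, and your computations $\dim Z(\mu_{1,1}^n)=2$, $\dim Z(\mu_{1,2}^n)=1$, $\dim Z^l(\mu_{1,3}^n)=2$, $\dim Z^l(\mu_{1,4}^n)=1$ check out. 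Your route avoids the paper's lengthiest computation and makes the non-isomorphism claim explicit (the paper leaves it implicit in the transformation formulas), at the modest cost of verifying the invariants by hand.
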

\begin{proof} By Theorem~\ref{pfil} when $p=1$ we obtain a filiform associative algebra. So we have that
\[e_ie_j=e_{i+j}, \quad 2\leq i+j\leq n-1, \qquad e_1e_n=\alpha e_{n-1}, \qquad e_ne_n=\beta e_{n-1}.\]

  We make the following general transformation of basis:
\[e_1^\prime=\sum\limits_{k=1}^nA_ke_k, \quad e_n^\prime=\sum\limits_{k=1}^nB_ke_k, \quad A_1(A_1B_n-A_nB_1)\neq0\]
while the other elements of the new basis (i.e. $e_k^\prime,2\leq k\leq n-1$) are obtained as products of the above elements.

We can obtain that
\begin{align*}
e_2^\prime& =A_1^2e_2+\sum\limits_{k=3}^{n-2}\sum\limits_{s=1}^{k-1}A_sA_{k-s}e_k+
(\alpha A_1A_n+\beta A_n^2+\sum\limits_{s=1}^{n-2}A_sA_{n-s-1})e_{n-1},\\
e_k^\prime &=A_1^ke_k+kA_1^{k-1}A_2e_{k+1}+\sum\limits_{s=k+2}^{n-1}\theta_{k,s}(A_1,A_2,\dots,A_n)e_s, \quad 3\leq k\leq n-1.
\end{align*}

From the following multiplication  table in this new basis
\begin{align*}
0&=e_n^\prime e_{n-2}^\prime =(\sum\limits_{k=1}^nB_ke_k)(A_1^{n-2}e_{n-2}+(\ast))=A_1^{n-2}B_1e_{n-1}, \\
0 &=e_n^\prime e_{n-3}^\prime  =(\sum\limits_{k=2}^nB_ke_k)(A_1^{n-3}e_{n-3}+(\ast))=A_1^{n-3}B_2e_{n-1},\\
0& =e_n^\prime e_{n-4}^\prime=(\sum\limits_{k=3}^nB_ke_k)(A_1^{n-4}e_{n-4}+(\ast))=A_1^{n-4}B_3e_{n-1},\\
&  \qquad \qquad \qquad  \qquad \qquad \vdots \\
0&=e_n^\prime e_{n-k}^\prime=(\sum\limits_{s=k-1}^nB_se_s)(A_1^{n-k}e_{n-k}+(\ast))=A_1^{n-k}B_{k-1}e_{n-1},\\
&  \qquad \qquad \qquad  \qquad \qquad  \vdots \\
0&=e_n^\prime e_2^\prime=(B_{n-3}e_{n-3}+B_{n-2}e_{n-2}+B_{n-1}e_{n-1}+B_ne_n)(A_1^2e_2+(\ast))=A_1^2B_{n-3}e_{n-1}, \\
0&=e_n^\prime e_1^\prime= (B_{n-2}e_{n-2}+B_{n-1}e_{n-1}+B_ne_n)(\sum\limits_{s=1}^nA_se_s)=(A_1B_{n-2}+\beta A_nB_n)e_{n-1},
\end{align*}
we have the following restrictions on the coefficients:
\[B_k=0, \quad 1\leq k\leq n-3, \quad B_{n-2}=-\frac{\beta A_nB_n}{A_1}, \quad A_1B_n\neq0.\]
Calculating new parameters we obtain:
\[\alpha^\prime=\frac{\alpha B_n}{A_1^{n-2}}, \qquad \beta^\prime=\frac{\beta B_n^2}{A_1^{n-1}}.\]

If $\alpha=0$, then we have $\alpha^\prime=0$.

\begin{itemize}
  \item If $\beta=0$, it follows $\beta^\prime=0$. Obviously we obtain $\mu_{1,1}^n$.
  \item If $\beta\neq0$, then choose $B_n=A_1^2/\beta^{1/2}$ obtain $\beta^\prime=1$. Hence we have $\mu_{1,2}^n$.
\end{itemize}

If $\alpha\neq0$, then by choosing $B_n=A_1^3/\alpha$ we have $\alpha^\prime=1$ and $\beta^\prime=\beta A_1^3/\alpha^2$.

\begin{itemize}
  \item If $\beta=0$, it follows $\beta^\prime=0$. So we have $\mu_{1,3}^n$.
  \item If $\beta\neq0$, then choose $A_1=\sqrt[3]{\alpha^2/\beta}$ and implies that $\beta^\prime=1$. Obviously we have $\mu_{1,4}^n$.
\end{itemize}

\end{proof}

\section{Naturally graded quasi-filiform associative algebra}

Let $\mathcal{A}$ be a naturally graded $n$-dimensional quasi-filiform associative algebra. Then, there are two possibilities for the characteristic sequence, either  $C(\mathcal{A})=(n-2,1,1)$ or $C(\mathcal{A})=(n-2,2)$. So we start to considering the case $C(\mathcal{A})=(n-2,1,1)$.

\begin{de}\label{2-fil}
  An associative algebra $\mathcal{A}$ is called $2$-filiform if $C(\mathcal{A})=(n-2,1,1)$.
\end{de}

By definition of characteristic sequence the operator $L_{e_1}$ in the Jordan form has one block $J_{n-2}$ of size $n-2$ and $2$ blocks $J_1$ (where $J_1=\{0\}$)  of size one.

The possible forms for the operator $L_{e_1}$ are the following:
\[
\begin{pmatrix}
J_{1}& 0&0 \\
0& J_{n-2}& 0  \\
0&0&J_{1}
\end{pmatrix},
\qquad
\begin{pmatrix}
J_{n-2}& 0&0 \\
0& J_{1} & 0  \\
0&0&J_{1}
\end{pmatrix}.
\]

Let us suppose that the operator $L_{e_1}$ has the first form. Then we have the next multiplications
\[
\left\{
\begin{aligned}
  e_1e_1&=0, \\
  e_1e_i&=e_{i+1}, \quad 2\leq i\leq n-2,\\
  e_1e_n&=0. \\
  \end{aligned}
\right.
\]

From the chain of equalities
\[e_4=e_1e_3=e_1(e_1e_2)=(e_1e_1)e_2=0\]
we obtain a contradiction.

Thus, we can reduce the study to the following form of the matrix  $L_{e_1}$:
\[\begin{pmatrix}
J_{n-2}& 0&0 \\
0& J_{1} & 0  \\
0&0&J_{1}
\end{pmatrix}.
\]

So, there exists a basis $\{e_1,e_2,\dots,e_{n-2},f_1,f_2\}$ such that
\[
\begin{array}{ll}
  e_1e_i=e_{i+1}, &  \quad 1\leq i\leq n-3, \\
  e_1f_1=0, \\
  e_1f_2=0.
\end{array}
\]

Applying arguments similar to the Case 1 of Theorem~\ref{natpfil}, we obtain the products:
\begin{align*}
  e_ie_j&=e_{i+j}, \qquad 2\leq i+j\leq n-2, \\
  e_kf_s&=0, \qquad \quad \ 1\leq k\leq n-2, \quad 1\leq s\leq 2.
\end{align*}

By multiplication we have
\[e_1\in\mathcal{A}_1, \quad e_2\in\mathcal{A}_2, \quad \dots, \quad e_{n-2}\in\mathcal{A}_{n-2},\]
but we do not know about the places of the basis $\{f_1,f_2\}$.

Let denote by $t_1,t_2$ the places of basis elements $\{f_1,f_2\}$ in the  natural graduation corresponding, that $f_i\in\mathcal{A}_{t_i}$, where $1\leq i\leq 2$.
 Further the law of the algebra with set $\{t_1,t_2\}$ will denote by $\mu(t_1,t_2)$. We can suppose that $1\leq t_1\leq t_2$.

\begin{pr}
Let $\mathcal{A}$ be a naturally graded $2$-filiform associative algebra. Then $t_i\leq i$ for any $1\leq i\leq 2$.
\end{pr}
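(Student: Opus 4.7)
The plan is to argue by contradiction in each of the two inequalities, leveraging two structural features of a naturally graded nilpotent associative algebra: first, that $\mathcal{A}_1 \cong \mathcal{A}/\mathcal{A}^2$ generates $\mathcal{A}$ as an algebra; second, that every homogeneous component $\mathcal{A}_i$ with $i\geq 2$ is spanned by $i$-fold products of elements of $\mathcal{A}_1$. Since $\mathcal{A}^{n-1}=0$, the grading terminates at $\mathcal{A}_{n-2}$, and combined with $e_j\in\mathcal{A}_j$ for $1\leq j\leq n-2$, the identity $\sum_i \dim\mathcal{A}_i=n$ forces $\dim\mathcal{A}_i\geq 1$ for every $i$ in this range, with $f_1,f_2$ contributing exactly two additional dimensions distributed among the $\mathcal{A}_i$'s.

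First I would prove $t_1\leq 1$. Assume for contradiction that $t_1\geq 2$; then $\dim\mathcal{A}_1=1$, equivalently $\dim\mathcal{A}^2=n-1$. Proposition~\ref{fili1} then implies that $\mathcal{A}$ is null-filiform, which contradicts $\mathcal{A}^{n-1}=0$, since any null-filiform $n$-dimensional algebra satisfies $\dim\mathcal{A}^{n-1}=2$. Hence $t_1=1$, i.e.\ $f_1\in\mathcal{A}_1$.

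Next I would handle $t_2\leq 2$ by supposing $t_2\geq 3$. The dimension count then forces $\mathcal{A}_1=\langle e_1,f_1\rangle$, $\mathcal{A}_i=\langle e_i\rangle$ for $2\leq i<t_2$, and $\mathcal{A}_{t_2}=\langle e_{t_2},f_2\rangle$. By the graded generation property, every element of $\mathcal{A}_{t_2}$ is a linear combination of $t_2$-fold products of $e_1$ and $f_1$. Since $e_k f_s=0$ was already established, any product in which $f_1$ appears immediately after an $e_1$ vanishes by associativity; only products of the form $f_1^a e_1^b$ with $a+b=t_2$ can be nonzero. Using the grading to write $f_1 e_1=\lambda e_2$ and $f_1^2=\mu e_2$ (both forced by $\mathcal{A}_1\mathcal{A}_1\subseteq\mathcal{A}_2=\langle e_2\rangle$), a short induction reduces each surviving product to a scalar multiple of $e_{t_2}$. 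Thus $\mathcal{A}_{t_2}\subseteq\langle e_{t_2}\rangle$, contradicting $\dim\mathcal{A}_{t_2}=2$.

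The crux, and the step I expect to be the main obstacle, is precisely this last combinatorial collapse: one must verify that no exotic bracketing of $e_1$'s and $f_1$'s produces an element of $\mathcal{A}_{t_2}$ that is linearly independent from $e_{t_2}$. The argument relies on (a) the vanishing $e_k f_s=0$, which kills every product with an $e_1 f_1$ pattern, and (b) the one-dimensionality of $\mathcal{A}_2$, which forces all remaining ``leading-$f_1$'' products to land in the single line $\langle e_{t_2}\rangle$. Once this reduction is in place, the two inequalities follow immediately.
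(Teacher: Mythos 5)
Your proposal is correct and follows essentially the same route as the paper: the bound $t_1\leq 1$ via Proposition~\ref{fili1} (null-filiformity contradiction) is identical, and for $t_2\leq 2$ you, like the paper, argue by contradiction that the homogeneous component $\mathcal{A}_{t_2}$ collapses into the line $\langle e_{t_2}\rangle$, contradicting $f_2\in\mathcal{A}_{t_2}$. The only difference is cosmetic: the paper first derives $f_1e_1=0$ from $(e_1f_1)e_1=e_1(f_1e_1)$ and then checks the two boundary products $f_1e_{t_2-1}=e_{t_2-1}f_1=0$, whereas you keep $f_1e_1=\lambda e_2$, $f_1f_1=\mu e_2$ and collapse all words $f_1^ae_1^b$ to multiples of $e_{t_2}$ — both hinge on $e_kf_s=0$ and the natural gradation.
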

\begin{proof}
In fact  $t_1=1$, since if $t_1>1$, then the algebra $\mathcal{A}$ is one generated and by Proposition~\ref{fili1}, it is a null-filiform associative algebra that is a contradiction.

We will prove by contradiction, that is suppose that $t_2>2$. Then
\[\mathcal{A}_1=\{e_1,f_1\}, \quad \mathcal{A}_2=\{e_2\}, \quad \dots, \quad \mathcal{A}_{t_2}=\{e_{t_2},f_2\}, \quad \dots \]
It follows
\[e_ie_j=e_{i+j}, \quad  2\leq i+j\leq n-p, \quad e_1f_1=0, \quad f_1e_1=\alpha e_2. \]

From the chain of equalities
\[0=(e_1f_1)e_1=e_1(f_1e_1)=\alpha e_3,\]
we obtain that $f_1e_1=0$.

By considering the next identities
\[f_1e_{t_2-1}=f_1(e_1e_{t_2-2})=(e_1f_1)e_{t_2-2}=0, \quad e_{t_2-1}f_1=(e_{t_2-2}e_1)f_1=e_{t_2-2}(e_1f_1)=0,\]
we obtain that $f_2\notin \mathcal{A}_{t_2}$, it is contradiction.
\end{proof}

Note that the case of $\mu(1,1)$ is a filiform algebra of degree $2$. Thus, it is sufficient to consider the case $\mu(1,2)$.

\begin{pr} Let $\mathcal{A}$ be a $5$-dimensional complex naturally graded $2$-filiform non-split associative algebras of type $\mu(1,2)$. Then $\mathcal{A}$ is isomorphic to one of the following pairwise non-isomorphic algebras:
\[\lambda_1:\left\{\begin{array}{l}
e_1e_1=e_2,\\
e_1e_2=e_2e_1=e_3,\\
e_4e_1=e_5\end{array}\right.
\qquad
\lambda_2:\left\{\begin{array}{l}
e_1e_1=e_2,\\
e_1e_2=e_2e_1=e_3,\\
e_4e_1=e_5,\\
e_4e_2=e_5e_1=e_3
\end{array}\right.\]
\end{pr}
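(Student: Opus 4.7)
The plan is to pin down all structure constants compatible with the $\mu(1,2)$ grading, reduce them via associativity, use the hypothesis $C(\mathcal{A})=(3,1,1)$ to kill all but one parameter, and then identify two normal forms.

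The reduction preceding the proposition already supplies $e_1e_1=e_2$, $e_1e_2=e_2e_1=e_3$, $e_1e_3=0$ and $e_kf_s=0$ for $1\le k\le 3$ and $s\in\{1,2\}$. Since $f_1\in\mathcal{A}_1$ and $f_2\in\mathcal{A}_2$, homogeneity forces the remaining products to be of the form
\[
f_1e_1=ae_2+bf_2,\quad f_1e_2=ce_3,\quad f_2e_1=de_3,\quad f_1f_1=e'e_2+gf_2,\quad f_1f_2=he_3,\quad f_2f_1=ke_3,
\]
with all other products zero. Applying associativity to the triples $(e_1,f_1,e_1)$, $(e_1,f_1,f_1)$, $(f_1,f_1,e_1)$, $(f_1,f_1,f_1)$ and $(f_1,e_1,f_1)$ will yield $a=0$, $e'=0$, $c=bd$, $bk=0$, $gd=bh$ and $g(k-h)=0$. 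Moreover, because $f_2\in\mathcal{A}_2\subset\mathcal{A}^2$ and only $b,g$ route products into $\langle f_2\rangle$, at least one of $b,g$ must be nonzero.

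The key step is to rule out $g\neq 0$ via the characteristic sequence. For $y=e_1+f_1\in\mathcal{A}\setminus\mathcal{A}^2$, a direct computation shows that the $e_1$-chain $e_1\mapsto e_2+bf_2\mapsto e_3\mapsto 0$ contributes a Jordan block of size $3$ (the tail is indeed $e_3$ because $L_y^2e_1=(1+c+bh)e_3$ and one checks $c+bh=0$ whenever $g\neq 0$). If in addition $g\neq 0$, then $L_yf_1=gf_2\neq 0$ is independent of $L_ye_1$ at the first step, producing a second Jordan block; the rank count $\dim\ker L_y=2$ and $\dim\ker L_y^2=4$ pins down its size to exactly $2$. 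Hence $L_y\sim J_3\oplus J_2$ and $C(y)=(3,2)>(3,1,1)$, contradicting the hypothesis. Therefore $g=0$, which combined with $b\neq 0$ collapses the remaining associativity relations to $h=k=0$ and $c=bd$.

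What remains is normalization. Rescaling $f_2\mapsto bf_2$ sets $b=1$ and leaves the single parameter $\tilde{d}:=bd$: if $\tilde{d}=0$ only $f_1e_1=f_2$ survives and the relabeling $f_1\to e_4$, $f_2\to e_5$ recovers $\lambda_1$, while if $\tilde{d}\neq 0$ the further rescaling $f_1\mapsto f_1/\tilde{d}$, $f_2\mapsto f_2/\tilde{d}$ normalizes all three nonzero coefficients to $1$ and produces $\lambda_2$. To separate the two algebras I would compute the center: $Z(\lambda_1)=\langle e_3,e_5\rangle$ has dimension $2$, whereas in $\lambda_2$ the product $e_5e_1=e_3\neq 0$ expels $e_5$ from the left center, so $Z(\lambda_2)=\langle e_3\rangle$ has dimension $1$. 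The main obstacle is the Jordan-form analysis for $L_{e_1+f_1}$ when $g\neq 0$, since one must verify that the ``$f_1$-chain'' is not absorbed into the ``$e_1$-chain'' regardless of the other structure constants; the cleanest route is to compute $\dim\ker L_y$ and $\dim\ker L_y^2$ directly rather than tracking chains by hand.
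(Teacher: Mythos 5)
Your proposal is correct and takes essentially the same route as the paper: the same graded parametrization and associativity constraints (your $a,e',c,b,d,g,h,k$ are the paper's $\alpha_1,\alpha_4,\alpha_2,\gamma_1,\beta_1,\gamma_2,\alpha_5,\beta_4$), the characteristic sequence used to kill the $f_1f_1$-coefficient, gradedness forcing $b\neq0$, and a final normalization to $\lambda_1,\lambda_2$. The only implementation-level differences are that you determine the Jordan type of $L_{e_1+f_1}$ via $\dim\ker L_y$ and $\dim\ker L_y^2$ where the paper invokes $\rank(L_{e_1+Ae_4})\leq 2$, and you separate $\lambda_1$ from $\lambda_2$ by the dimension of the center rather than by tracking the parameter under a general change of generators.
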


\begin{proof} We may assume
\[\mathcal{A}_1=\{e_1,e_4\}, \quad \mathcal{A}_2=\{e_2,e_5\}, \quad \mathcal{A}_3=\{e_3\}.\]

Now we have the next multiplications
\[\begin{array}{ll}
    e_ie_j=e_{i+j}, & \qquad 2\leq i+j\leq 3,  \\
    e_4e_1=\alpha_1e_2+\gamma_1e_5, & \qquad e_4e_2=\alpha_2e_3, \\
    e_4e_4=\alpha_4e_2+\gamma_2e_5, &  \qquad e_4e_5=\alpha_5e_3, \\
    e_5e_1=\beta_1e_3, &  \qquad e_5e_4=\beta_4e_3. \\

  \end{array}
\]

Using the identities we obtain the next restrictions:
\[\left\{\begin{aligned}
\alpha_1&=\alpha_4=0, \\
\beta_4\gamma_1 &=0,\\
\alpha_2&=\beta_1\gamma_1,\\
(\alpha_5-\beta_4)\gamma_2&=0, \\
\alpha_5\gamma_1-\beta_1\gamma_2&=0.
\end{aligned}\right.
\]

According to the characteristic sequence, we derive that $\rank(L_{e_1+Ae_4})\leq 2(A\neq0)$, this implies that $\gamma_2=0$ and $\gamma_1\neq0$. A simple transformation of the basis implies that $\gamma_1=1$.
 We deduce that $\alpha_5=\beta_4=0$ and $\beta_1=\alpha_2$. Thus the products are the following:
\[\left\{\begin{aligned}
  e_ie_j&=e_{i+j}, & \quad 2\leq i+j\leq 3, \\
  e_{4}e_1&=e_5, &  \\
  e_{4}e_2&=\alpha_2e_3, &  \\
  e_{5}e_1&=\alpha_2e_3. &
  \end{aligned}\right.
\]

Make the change of generator basis elements
\[e_1^\prime=A_1e_1+A_2e_4, \quad e_4^\prime=B_1e_1+B_2e_4, \quad A_1B_2-A_2B_1\neq0.\]

The equality $e_1^\prime e_4^\prime=0$ deduces $B_1=0$. Calculating new parameters we obtain:
\[\alpha_2^\prime=\frac{\alpha_2B_4}{A_1+\alpha_2A_4}.\]

If $\alpha_2=0$, then $\alpha_2^\prime=0$. So we obtain the algebra $\lambda_1$.

If $\alpha_2\neq0$, then putting $B_4=\frac{A_1+\alpha_2A_4}{\alpha_2}$, we obtain $\alpha_2^\prime=1$. Thus, in this case we get the algebra $\lambda_2$.
\end{proof}

\begin{teo} Let $\mathcal{A}$ be an $n$-dimensional ($n>5$) complex naturally graded 2-filiform non-split associative algebras of type $\mu(1,2)$. Then $\mathcal{A}$ is isomorphic to the following algebra:
\[\mu_{2,1}^n:\left\{\begin{aligned}
  e_ie_j&=e_{i+j}, & \quad 2\leq i+j\leq n-2, \\
  e_{n-1}e_1&=e_n. &  \\
  \end{aligned}\right.
\]
\end{teo}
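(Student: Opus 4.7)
The plan is to follow the pattern of the preceding $5$-dimensional proposition, combining associativity with the rank constraint from $C(\mathcal{A})=(n-2,1,1)$ and the non-split hypothesis. Using the grading $\mathcal{A}_1=\langle e_1,f_1\rangle$, $\mathcal{A}_2=\langle e_2,f_2\rangle$, $\mathcal{A}_i=\langle e_i\rangle$ for $3\leq i\leq n-2$, together with the already established identities $e_ie_j=e_{i+j}$ for $2\leq i+j\leq n-2$ and $e_kf_s=0$, I would parameterize the remaining unknown products as
\[f_1e_1=\alpha e_2+qf_2,\quad f_2e_1=we_3,\quad f_1f_1=re_2+sf_2,\quad f_1f_2=te_3,\quad f_2f_1=ue_3,\quad f_2f_2=ve_4.\]

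The first batch of associativity relations, obtained by multiplying on the left by $e_1$, all have zero right-hand side because $e_1f_s=0$: from $(e_1f_1)e_1=e_1(f_1e_1)$ one gets $\alpha=0$, and from $(e_1f_x)f_y=e_1(f_xf_y)$ one obtains $r=t=u=0$. A second batch, multiplying on the right by $e_1$, yields $t=qw^2$, $u=qv$ and $r+sw=qt$; combined with the previous vanishings, these collapse to $qw^2=0$, $qv=0$ and $sw=0$. An induction using $(f_1e_1)e_{i-1}=f_1(e_1e_{i-1})$ and the analogous identity for $f_2$ propagates $f_1e_i=qwe_{i+1}$ and $f_2e_i=we_{i+2}$ along the main chain.

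Next I would invoke the characteristic sequence at $x=e_1+\lambda f_1\in\mathcal{A}\setminus\mathcal{A}^2$. Direct computation gives
\[L_x(e_1)=e_2+\lambda qf_2,\ L_x(e_i)=(1+\lambda qw)e_{i+1}\ (2\leq i\leq n-3),\ L_x(e_{n-2})=0,\ L_x(f_1)=\lambda sf_2,\ L_x(f_2)=0.\]
If $s\neq 0$ then, for generic $\lambda$, the images of $e_1$ and $f_1$ are linearly independent in $\langle e_2,f_2\rangle$, so $L_x$ has rank $n-2$ and kernel dimension $2$; a routine bookkeeping of $\dim\ker L_x^k$ shows the Jordan type of $L_x$ is $(n-2,2)$, which lexicographically exceeds $(n-2,1,1)$ and contradicts $C(\mathcal{A})=(n-2,1,1)$. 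Hence $s=0$.

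Finally, the non-split hypothesis handles the remaining possibility. If $q=0$, then with $r=s=t=u=0$ and the derived $f_1e_i=qwe_{i+1}=0$, every product involving $f_1$ vanishes, so $f_1\in Z(\mathcal{A})$ and $\mathcal{A}=\mathbb{F}f_1\oplus\langle e_1,\dots,e_{n-2},f_2\rangle$ is a nontrivial direct sum of two ideals, contradicting the non-split assumption. Thus $q\neq 0$, which together with $qw^2=0$ and $qv=0$ forces $w=v=0$; the only surviving non-$e$ product is $f_1e_1=qf_2$, which is normalized to $f_1e_1=f_2$ by rescaling $f_2\leftarrow qf_2$. Relabeling $e_{n-1}:=f_1$ and $e_n:=f_2$ then yields $\mu_{2,1}^n$. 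The main obstacle is the Jordan-type bookkeeping in the characteristic-sequence step, where one must carefully track $\dim\ker L_x^k$ to distinguish $(n-2,2)$ from $(n-2,1,1)$; everything else reduces to the same associativity and rescaling pattern used in the $5$-dimensional case.
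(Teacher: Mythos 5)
Your proposal is correct and takes essentially the same route as the paper: parameterize the graded products, eliminate almost all structure constants by associativity, use the Jordan type of $L_{e_1+\lambda f_1}$ to force the $f_2$-coefficient of $f_1f_1$ to vanish (the paper's $\gamma_2=0$), and normalize $f_1e_1=f_2$. The only minor deviation is that you exclude $q=0$ via the non-split hypothesis, while the paper gets $(\gamma_1,\gamma_2)\neq(0,0)$ directly from $e_n\in\mathcal{A}_2$ in the natural grading; both are valid under the stated assumptions.
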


\begin{proof}
We denote by $e_{n-1},e_n$ the elements $f_1, f_2$.

According to the theorem conditions we have the following multiplications
\[\left\{\begin{aligned}
  e_ie_j &=e_{i+j}, && 2\leq i+j\leq n-2, \\
  e_{n-1}e_1 &=\alpha_1e_2+\gamma_1e_n, &  \\
  e_{n-1}e_i &=\alpha_ie_{i+1},&& 2\leq i\leq n-3, \\
  e_{n-1}e_{n-1} & =\alpha_{n-1}e_2+\gamma_2e_n, &  \\
  e_{n-1}e_n & =\alpha_ne_3, &  \\
  e_ne_i & =\beta_ie_{i+2}, && 1\leq i\leq n-4, \\
  e_ne_{n-1} & =\beta_{n-1}e_3, &  \\
  e_ne_n & =\beta_ne_4, &
\end{aligned}\right.
\]
where $(\gamma_1,\gamma_2)\neq(0,0)$.

The equalities
\[0=(e_1e_n)e_i=e_1(e_ne_i)=\beta_ie_1e_{i+2}=\beta_ie_{i+3}\]
yield $\beta_i=0, \ 1\leq i\leq n-5$.

Moreover, from
\[\beta_{n-4}e_{n-2}=e_ne_{n-4}=e_n(e_1e_{n-5})=(e_ne_1)e_{n-5}=0\]
we obtain that $\beta_{n-4}=0$.

Considering the identity
\[0=(e_1e_n)e_{n-1}=e_1(e_ne_{n-1})=\beta_{n-1}e_1e_3=\beta_{n-1}e_4, \]
we can deduce that $\beta_{n-1}=0$.

From identities $e_6(e_5e_1)=(e_6e_5)e_1, \ e_6(e_5e_5)=e_6(e_5e_5)$, with $n=6$, and
from $e_1(e_ne_{n})=(e_1e_n)e_{n}$,
with $n>6$, we obtain that $\beta_n=0$.

Analogously, from identities
\[(e_1e_{n-1})e_i=e_1(e_{n-1}e_i), \quad (e_{n-1}e_1)e_{n-4}=e_{n-1}(e_1e_{n-4})\]
we derive that
\[\alpha_{i}=\alpha_{n-1}=\alpha_n=0, \quad 1\leq i\leq n-3.\]
Thus, we obtain the following multiplication table
\[\left\{\begin{aligned}
  e_ie_j&=e_{i+j}, &  \quad 2\leq i+j\leq n-2, \\
  e_{n-1}e_1&=\gamma_1e_n, &  \\
  e_{n-1}e_{n-1}&=\gamma_2e_n &
 \end{aligned}\right.
\]
where $(\gamma_1,\gamma_2)\neq(0,0)$.

Consider for the element $e_1 + Ae_{n-1} (A\neq0)$ the operator $L_{e_1+Ae_{n-1}}$. Then we have
\[L_{e_1+Ae_{n-1}}=
    \begin{pmatrix}
      0 & I_{n-3} &  & 0 & A\gamma_1 \\
      \vdots & \vdots &  & \vdots & \vdots \\
      0 & 0 & \cdots & 0 & 0 \\
      0 & 0 & \cdots & 0 & A\gamma_2 \\
      0 & 0 & \cdots & 0 & 0
    \end{pmatrix},
\]
where $I_{n-3}$ is the unit matrix of size $n-3$.


Since $\rank(L_{e_1+Ae_{n-1}})\leq n-3$ (otherwise, the characteristic sequence for the
element $e_1+Ae_{n-1}$ would be bigger than $(n-2,1,1)$), we conclude that $A\gamma_{2}=0$, hence $\gamma_{2}=0$ and $\gamma_1\neq0$. By recalling of the basis, one may assume that $\gamma_1=1$.
\end{proof}

Now we will consider the case of the characteristic sequence equal to $(n-2,2)$

According to the definition of characteristic sequence, $C(\mathcal{A})=(n-2,2)$, it follows the existence of a basis element $e_1\in\mathcal{A}\setminus\mathcal{A}^2$ and a basis $\{e_1,e_2,\dots,e_n\}$ such that the left multiplication  operator $L_{e_1}$ has one of the following forms:
\[
\begin{pmatrix}
J_{2}& 0 \\
0& J_{n-2}
\end{pmatrix},
\qquad
\begin{pmatrix}
J_{n-2}& 0 \\
0& J_2
\end{pmatrix}.
\]

Let us suppose that the operator $L_{e_1}$ has the first form. Then we have the next multiplications
\[\left\{\begin{aligned}
e_1e_1&=e_2,& \\
e_1e_2&=0,&\\
e_1e_i&=e_{i+1},& \quad 3\leq i\leq n-1\\
e_1e_n&=0.& \\
\end{aligned}\right.
\]

From the chain of equalities
\begin{align*}
0& =(e_1e_2)e_3=(e_1(e_1e_1))e_3=((e_1e_1)e_1)e_3=(e_2e_1)e_3=e_2(e_1e_3)\\
&=e_2e_4=(e_1e_1)e_4=e_1(e_1e_4)=e_1e_5=e_6,
\end{align*}
we obtain contradiction.

Thus, we can reduce the study to the following form of the matrix  $L_{e_1}$:
\[\begin{pmatrix}
J_{n-2}& 0 \\
0& J_2
\end{pmatrix}.\]

\begin{teo} Let $\mathcal{A}$ be $5$-dimensional complex naturally graded associative algebra with characteristic sequence $C(\mathcal{A})=(3,2)$. Then it is isomorphic to one of the following pairwise non-isomorphic algebras:

\[\pi_1(\alpha): \left\{\begin{array}{l}
e_1e_1=e_2, \\
e_1e_2=e_2e_1=e_3,\\
e_1e_4=e_5,\\
e_{4}e_1=\alpha e_5, \alpha\in\mathds{C}
\end{array}\right.
\quad
\pi_2: \left\{\begin{array}{l}
e_1e_1=e_2, \\
e_1e_2=e_2e_1=e_3,\\
e_1e_4=e_4e_1=e_5,\\
e_{4}e_4=e_5
\end{array}\right.
\quad
\pi_3: \left\{\begin{array}{l}
e_1e_1=e_2, \\
e_1e_2=e_2e_1=e_3,\\
e_1e_4=e_5,\\
e_4e_4=e_5
\end{array}\right.\]

\[\pi_4:\left\{\begin{array}{l}
e_1e_1=e_2, \\
e_1e_2=e_2e_1=e_3,\\
e_1e_4=e_5,\\
e_4e_1=e_2-e_5,\\
e_5e_1=e_3
\end{array}\right.
\quad
\pi_5:\left\{\begin{array}{l}
e_1e_1=e_2,\\
e_1e_2=e_2e_1=e_3,\\
e_1e_4=e_5,\\
e_4e_1=e_2+e_5,\\
e_4e_2=2e_3,\\
e_4e_4=2e_5, \\
e_5e_1=e_3
\end{array}\right.
\quad
\pi_6:\left\{\begin{array}{l}
e_1e_1=e_2,\\
e_1e_2=e_2e_1=e_3,\\
e_1e_4=e_4e_1=e_5,\\
e_4e_4=e_2, \\
e_4e_5=e_5e_4=e_3
\end{array}\right.\]

\[\pi_7:\left\{\begin{array}{l}
e_1e_1=e_2, \\
e_1e_2=e_2e_1=e_3,\\
 e_1e_4=-e_4e_1=e_5,\\
e_4e_4=e_2,\\
e_5e_4=-e_4e_5=e_3
\end{array}\right.
\quad
\pi_8(\alpha):\left\{\begin{array}{ll}
e_1e_1=e_2, & e_1e_2=e_2e_1=e_3,\\
e_1e_4=e_5, &  e_4e_1=(1-\alpha)e_2+\alpha e_5,  \\
e_4e_2=(1-\alpha^2)e_3, &  e_4e_4=-\alpha e_2+(1+\alpha)e_5,   \\
e_4e_5=-\alpha^2e_3, & e_5e_1=(1-\alpha)e_3, \\
e_5e_4=-\alpha e_3, & \alpha\in\mathds{C}.
\end{array}\right.\]
\end{teo}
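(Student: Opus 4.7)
The plan is to follow the same blueprint used in the preceding $2$-filiform classifications: the gradation together with the Jordan form $J_3\oplus J_2$ of $L_{e_1}$ already fixes much of the structure, and the remainder is a finite computation modulo associativity and the freedom to change generators in $\mathcal{A}_1$.

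First, combining the Jordan decomposition of $L_{e_1}$ with the gradation, I would choose a homogeneous basis $\{e_1,e_2,e_3,e_4,e_5\}$ with $\mathcal{A}_1=\langle e_1,e_4\rangle$, $\mathcal{A}_2=\langle e_2,e_5\rangle$, $\mathcal{A}_3=\langle e_3\rangle$, setting $e_2=e_1e_1$, $e_3=e_1e_2$, $e_5=e_1e_4$, and noting that $e_1e_3=e_1e_5=0$ is automatic. Since $\mathcal{A}_4=0$, any product of total grade $\geq 4$ vanishes for free, so the only undetermined products are $e_2e_1,e_2e_4\in\langle e_3\rangle$, $e_4e_1,e_4e_4\in\langle e_2,e_5\rangle$, and $e_4e_2,e_4e_5,e_5e_1,e_5e_4\in\langle e_3\rangle$. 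Introducing scalar parameters for these, I would then run the associativity identities $(xy)z=x(yz)$ on triples drawn from $\{e_1,e_4\}$, using the known action of $e_1$ to propagate. The identities $(e_1e_1)e_1=e_1(e_1e_1)$ and $(e_1e_1)e_4=e_1(e_1e_4)$ immediately give $e_2e_1=e_3$ and $e_2e_4=0$; the remaining identities express the coefficients of $e_4e_2,e_4e_5,e_5e_1,e_5e_4$ polynomially in the pairs $(b_1,c_1),(b_2,c_2)$, where $e_4e_1=b_1e_2+c_1e_5$ and $e_4e_4=b_2e_2+c_2e_5$, and impose a few polynomial relations among these four scalars.

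Next, I would invoke the characteristic-sequence constraint on the pencil $x=e_1+Ae_4$, $A\in\mathbb{C}^{*}$: the Jordan type of $L_x$ cannot exceed $(3,2)$ lexicographically, ruling out $C(x)=(4,1)$, which in the matrix of $L_x$ becomes a rank bound forcing additional vanishings among $(b_1,c_1,b_2,c_2)$. What survives is a finite union of strata in parameter space. Finally, I would use the remaining freedom of a linear change of generators $e_1'=A_1e_1+A_2e_4$, $e_4'=B_1e_1+B_2e_4$ with $A_1B_2-A_2B_1\neq 0$ (propagated to $e_2',e_3',e_5'$ through the algebra law) to normalize each stratum, yielding by case analysis the eight families $\pi_1(\alpha),\pi_2,\ldots,\pi_7,\pi_8(\alpha)$. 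The main obstacle I anticipate is precisely this last step: tracking how each stratum transforms under the $2\times 2$ generator change, choosing canonical representatives within each orbit, and certifying pairwise non-isomorphism. The discrete families can be separated by invariants such as $\dim Z^l(\mathcal{A})$, $\dim Z^r(\mathcal{A})$, commutativity of the generator pair $\{e_1,e_4\}$, and the image dimension of $L_{e_4}$; for the two continuous families $\pi_1(\alpha)$ and $\pi_8(\alpha)$ the parameter $\alpha$ must moreover be shown to be a genuine invariant via an explicit rational function of the structure constants.
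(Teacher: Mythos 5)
Your plan follows the paper's proof essentially verbatim: a homogeneous basis adapted to the gradation and to the Jordan form of $L_{e_1}$, parametrization of $e_4e_1$ and $e_4e_4$, associativity identities that determine the grade-three products $e_4e_2,e_4e_5,e_5e_1,e_5e_4$ and impose two polynomial relations on the four parameters, and then normalization by the generator change $e_1^\prime=A_1e_1+A_2e_4$, $e_4^\prime=B_1e_1+B_2e_4$ with a case analysis (which the paper organizes by whether $e_5$ lies in $Z(\mathcal{A})$, $Z^r(\mathcal{A})$, etc.), ending with the same non-isomorphism checks, including the invariance of $\alpha$ in the continuous families. The only superfluous ingredient is the rank bound on the pencil $L_{e_1+Ae_4}$: since $\mathcal{A}^4=0$ here, no left multiplication can have a Jordan block of size at least $4$, so that constraint is vacuous and, unlike in the $(n-2,1,1)$ case, all restrictions come from associativity and the basis change alone.
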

\begin{proof}
We can assume that
\[\mathcal{A}_1=\{e_1,e_4\}, \quad \mathcal{A}_2=\{e_2,e_5\}, \quad \mathcal{A}_3=\{e_3\}.\]

By the identities associativity and from the  left multiplication operator $L_{e_1}$ we deduce
  \[\begin{array}{lll}
e_1e_1=e_2,& e_4e_1=\alpha_1e_2+\alpha_2e_5,& e_4e_5=\alpha_2\beta_1e_3,  \\
e_1e_2=e_2e_1=e_3,&e_4e_2=\alpha_1(\alpha_2+1)e_3,&e_5e_1=\alpha_1e_3,\\
e_1e_4=e_5,& e_4e_4=\beta_1e_2+\beta_2e_5,&e_5e_4=\beta_1e_3,\\
\end{array}\]
with the next restrictions
\begin{equation}\label{a1}\alpha_1\beta_2=\alpha_1^2(\alpha_2+1)+\beta_1(\alpha_2^2-1), \quad \beta_1(\alpha_1(\alpha_2+1)+\beta_2(\alpha_2-1))=0.\end{equation}

Let us suppose $\boxed{e_5\in Z(\mathcal{A})}$, it follows $\alpha_1=\beta_1=0$. Then we have
\[e_1e_1=e_2, \quad e_1e_2=e_2e_1=e_3, \quad e_1e_4=e_5, \quad e_4e_1=\alpha_2e_5, \quad e_4e_4=\beta_2e_5.\]

Make the change of generator basis elements
\[e_1^\prime=A_1e_1+A_2e_4, \quad e_4^\prime=B_1e_1+B_2e_4, \quad A_1(A_1+\beta_2A_2)(A_1B_2-A_2B_1)\neq0.\]

The equality $e_1^\prime e_5^\prime=0$, implies $B_1=0$. Calculating new parameters we obtain:
\[\alpha_2^\prime=\frac{\alpha_2A_1+\beta_2A_2}{A_1+\beta_2A_2}, \quad \beta_2^\prime=\frac{\beta_2B_2}{A_1+\beta_2A_2}.\]

If $\beta_2=0$, then $\beta_2^\prime=0$ and $\alpha_2^\prime=\alpha_2$ and we obtain $\pi_1(\alpha)$.

If $\beta_2\neq0$, then by choosing $B_2=\frac{A_1+\beta_2A_2}{\beta_2}$  we deduce $\beta_2^\prime=1$. So we have the next invariant expression
\[\alpha_2^\prime-1=\frac{(\alpha_2-1)A_1}{A_1+\beta_2A_2}.\]

\begin{itemize}
  \item If $\alpha_2=1$, then $\alpha_2^\prime=1$ and we have $\pi_2$.
  \item If $\alpha_2\neq1$, then by choosing $A_2=-\frac{\alpha_2A_1}{\beta_2}$ we get $\alpha_2^\prime=0$ and obtain $\pi_3$.
\end{itemize}

Let us suppose $\boxed{e_5\notin Z(\mathcal{A})}$ following $(\alpha_1,\beta_1)\neq(0,0)$. Then we consider the next cases.

\textbf{Case 1.} Let $\boxed{e_5\in Z^r(\mathcal{A})}$. Then $\alpha_2\beta_1=0$.

\textbf{Case 1.1.} If $\beta_1=0$, then $\alpha_1\neq0$ and $\beta_2=\alpha_1(\alpha_2+1)$.
Analogously as the previous case we make change of generator basis
\[e_1^\prime=A_1e_1+A_2e_4, \qquad e_4^\prime=B_1e_1+B_2e_4,\]
\[ A_1(A_1+\alpha_1A_2)(A_1+\alpha_1A_2+\alpha_1\alpha_2A_2)(A_1B_2-A_2B_1)\neq0,\]
and find restrictions on the coefficients: $B_1=0$.

Calculating new parameters we obtain
\[\alpha_1^\prime=\frac{\alpha_1B_2}{A_1+\alpha_1A_2}, \quad \alpha_2^\prime=\frac{\alpha_2A_1}{A_1+\alpha_1A_2+\alpha_1\alpha_2A_2}, \quad \alpha_2^\prime+1=\frac{(\alpha_2+1)(A_1+\alpha_1A_2)}{A_1+\alpha_1A_2+\alpha_1\alpha_2A_2}.\]

By choosing $B_2=\frac{A_1+\alpha_1A_2}{\alpha_1}$ we obtain that $\alpha_1^\prime=1$.

If $\alpha_2=0$ then $\alpha_2^\prime=0$, and we have $\pi_8(0)$.

If $\alpha_2\neq0$ then we consider the next cases:
\begin{itemize}
  \item If $\alpha_2=-1$ then $\alpha_2^\prime=-1$ and we obtain $\pi_4$.
  \item If $\alpha_2\neq-1$ then by choosing $A_2=\frac{(\alpha_2-1)A_1}{\alpha_1(\alpha_2+1)}$, we deduce $\alpha_2^\prime=1$ and we have $\pi_5$.
\end{itemize}

\textbf{Case 1.2.} If $\beta_1\neq0$ then $\alpha_2=0$. By restrictions \eqref{a1} we obtain that $\beta_2=\alpha_1$ and $\beta_1=0$ that is contradiction of condition.

\textbf{Case 2.} Let $\boxed{e_5\notin Z^r(\mathcal{A})}$. Then $\alpha_2\beta_1\neq0$. Taking the next change of basis \[e_i^\prime=e_i, \quad 1\leq i\leq 3, \quad e_4^\prime=\frac{1}{\sqrt{\beta_1}}e_4, \ e_5^\prime=\frac{1}{\sqrt{\beta_1}}e_5\] we can assume that $\beta_1=1$.
%

\textbf{Case 2.1.} Let $\alpha_1=0$. Then from restriction \eqref{a1} we have
$\left\{\begin{aligned}
           \alpha_2^2-1&=0, \\
           (\alpha_2-1)\beta_2&=0.
         \end{aligned}\right.$

\textbf{Case 2.1.1.} Let $\alpha_2=1$. Then we have
\begin{align*}
e_1e_1&=e_2, \qquad \qquad  e_1e_2=e_2e_1=e_3, \quad e_1e_4=e_4e_1=e_5, \\
e_4e_4&=e_2+\beta_2e_5,\quad e_4e_5=e_5e_4=e_3.
\end{align*}

By making the change of generator basis elements, calculating restrictions on the coefficients and new parameters and applying similar arguments as the previous cases we obtain $\pi_8(1)$ and $\pi_6$.

\textbf{Case 2.1.2.} If $\alpha_2\neq1$ then $\alpha_2=-1$ and $\beta_2=0$. Hence we obtain $\pi_7$.

Note that the algebra $\pi_6$ is commutative and algebra $\pi_7$ is noncommutative. Thus, algebras $\pi_6$ and $\pi_7$ are non-isomorphic.

\textbf{Case 2.2.} Let $\alpha_1\neq0$. Then we consider the next cases.

\textbf{Case 2.2.1.} If $\alpha_2=-1$ then $\beta_2=0$. Make the change of generator basis elements
\[e_1^\prime=A_1e_1+A_2e_4, \quad e_4^\prime=B_1e_1+B_2e_4, \quad A_1(A_1^2+\alpha_1A_1A_2+A_2^2)(A_1B_2-A_2B_1)\neq0.\]
Calculating restrictions on the coefficients and new parameters we obtain:
\[B_1=0, \quad B_2=(A_1^2+\alpha_1A_1A_2+A_2^2)^{1/2}, \quad \alpha_1^\prime=\frac{\alpha_1A_1+2A_2}{(A_1^2+\alpha_1A_1A_2+A_2^2)^{1/2}}.\]

It is easy to check that the next expression is an invariant expression
\[(\alpha_1^\prime)^2-4=\frac{(\alpha_1^2-4)A_1^2}{A_1^2+\alpha_1A_1A_2+A_2^2}.\]

If $\alpha_1^2=4$ then $\alpha_1^\prime=\pm2$ and we have $\pi_8(-1)$.

If $\alpha_1^2\neq4$ then by choosing $A_2=-\frac{1}{2}\alpha_1A_1$ we obtain $\alpha_1^\prime=0$ and it follows we have $\pi_7$.

\textbf{Case 2.2.2.} Let $\alpha_2\neq-1$. Then from restriction \eqref{a1} we get $\alpha_2\neq1, \beta_2\neq0$ and
\[\alpha_1=\frac{(1-\alpha_2)\beta_2}{1+\alpha_2}, \quad (\alpha_2+1)^2+\alpha_2\beta_2^2=0.\]

By taking the next change of basis
\[e_i^\prime=e_i, \quad 1\leq i\leq3, \quad e_4^\prime=\frac{1+\alpha_2}{\beta_2}e_4, \quad e_5^\prime=\frac{1+\alpha_2}{\beta_2}e_5,\]
we obtain $\pi_8(\alpha)$ where $\alpha\notin\{\pm1,0\}$.

By making the general change of basis, using  multiplication table in a new basis and
calculating new parameters we obtain that $\alpha^\prime=\alpha$. It means that in the different values of $\alpha$, the obtained algebras are non-isomorphic.
\end{proof}

\begin{teo} Let $\mathcal{A}$ be $n$-dimensional $(n>5)$ complex naturally graded associative algebra with characteristic sequence $C(\mathcal{A})=(n-2,2)$. Then it is isomorphic to one of the following pairwise non-isomorphic algebras:
\[\mu_{2,2}^n(\alpha):\left\{\begin{array}{l}
e_ie_j=e_{i+j}, \\
e_1e_{n-1}=e_n,\\
e_{n-1}e_1=\alpha e_n, \\
\end{array}\right. \quad
\mu_{2,3}^n: \left\{\begin{array}{ll}
e_ie_j=e_{i+j}, \\
e_1e_{n-1}=e_n,\\
e_{n-1}e_1=e_n,\\
e_{n-1}e_{n-1}=e_n, \\
\end{array}\right.
\quad
 \mu_{2,4}^n:\left\{\begin{array}{l}
e_ie_j=e_{i+j}, \\
e_1e_{n-1}=e_n,\\
e_{n-1}e_{n-1}=e_n,\\
\end{array}\right.
\quad
\]
where $\alpha\in\mathds{C}$ and  \, $2\leq i+j\leq n-2$.
\end{teo}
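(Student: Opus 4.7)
The plan is to follow the scheme of the preceding theorem for $C(\mathcal{A})=(n-2,1,1)$. The paragraph preceding the statement has already eliminated one of the two candidate Jordan forms for $L_{e_1}$, so we may fix a basis $\{e_1,\dots,e_{n-2},e_{n-1},e_n\}$ adapted to $J_{n-2}\oplus J_2$:
\[
e_1 e_i=e_{i+1}\ (1\le i\le n-3),\qquad e_1 e_{n-2}=0,\qquad e_1 e_{n-1}=e_n,\qquad e_1 e_n=0.
\]
The chain argument of Case~1 of Theorem~\ref{natpfil}, applied to the $e$-chain alone, then yields the core relations $e_i e_j=e_{i+j}$ for $2\le i+j\le n-2$. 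Since the nilindex of $\mathcal{A}$ is $n-1<n+1$, Proposition~\ref{fili1} forbids $\mathcal{A}$ from being singly generated, so $e_{n-1}$ must be a generator; hence $e_{n-1}\in\mathcal{A}_1$, $e_n\in\mathcal{A}_2$, and homogeneity confines every remaining unknown product to a specific one- or two-dimensional graded piece.

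The heart of the proof is a cascade of associativity identities killing all but two scalars. Writing
\[
e_{n-1}e_1=\alpha e_2+\gamma e_n,\qquad e_{n-1}e_{n-1}=\mu e_2+\nu e_n,\qquad e_n e_1=\lambda e_3,
\]
together with analogous single-coefficient expansions for the remaining products, I would apply in order the identities $e_i e_n=(e_{i-1}e_1)e_n=e_{i-1}(e_1 e_n)=0$ for $2\le i\le n-2$; $(e_1 e_n)e_1=e_1(e_n e_1)$, which reads $0=\lambda e_4$ and forces $\lambda=0$, hence $e_n e_i=0$ for every $i$; $(e_1 e_{n-1})e_1=e_1(e_{n-1}e_1)$, which reads $0=\alpha e_3$ and forces $\alpha=0$; $(e_1 e_{n-1})e_{n-1}=e_1(e_{n-1}e_{n-1})$ combined with $(e_1 e_n)e_{n-1}=e_1(e_n e_{n-1})$, which together force $\mu=0$; and one more such identity each for $e_{n-1}e_n$ and $e_n e_n$. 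The hypothesis $n>5$ is used here through the need for $e_3$ and $e_4$ to be nonzero. What survives is the two-parameter family
\[
e_i e_j=e_{i+j}\ (2\le i+j\le n-2),\quad e_1 e_{n-1}=e_n,\quad e_{n-1}e_1=\gamma e_n,\quad e_{n-1}e_{n-1}=\nu e_n.
\]

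The final step is normalization via a change of generators $e_1'=A_1 e_1+A_2 e_{n-1}$, $e_{n-1}'=B_1 e_1+B_2 e_{n-1}$, with the remaining $e_k'$ defined as iterated products. Imposing that $L_{e_1'}$ retain Jordan type $(n-2,2)$ forces $B_1=0$, and the induced transformation on $(\gamma,\nu)$ splits into three cases: $\nu=0$ makes $\gamma$ a genuine invariant, giving the continuous family $\mu_{2,2}^n(\alpha)$; if $\nu\ne 0$, rescaling yields $\nu=1$, after which $\gamma=0$ produces $\mu_{2,4}^n$ and $\gamma\ne 0$ can be further rescaled to $1$, producing $\mu_{2,3}^n$.

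The main obstacle is the bookkeeping in the cascade: the vanishing statements must be established in the correct order (in particular $\lambda=0$ before $\alpha=0$, and $\alpha=0$ before the cleanups of the remaining $e_{n-1}e_i$), and one must keep track of several indeterminates simultaneously. The collapse of precisely this cascade in dimension $5$—where $e_3,e_4$ are absent or coincide with other basis elements—is what inflates the classification there to the eight families of the preceding theorem.
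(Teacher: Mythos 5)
Your overall route is the same as the paper's (grading forces $\mathcal{A}_1=\langle e_1,e_{n-1}\rangle$, $\mathcal{A}_2=\langle e_2,e_n\rangle$; an associativity cascade reduces everything to the two parameters $\gamma,\nu$ in $e_{n-1}e_1=\gamma e_n$, $e_{n-1}e_{n-1}=\nu e_n$; then a change of generators normalizes them), but the final normalization step contains a genuine error. With $e_1'=A_1e_1+A_{n-1}e_{n-1}+\cdots$, $e_{n-1}'=B_{n-1}e_{n-1}+\cdots$ one computes
\[
\gamma'=\frac{\gamma A_1+\nu A_{n-1}}{A_1+\nu A_{n-1}},\qquad \nu'=\frac{\nu B_{n-1}}{A_1+\nu A_{n-1}},\qquad\text{hence}\qquad \gamma'-1=\frac{(\gamma-1)A_1}{A_1+\nu A_{n-1}}.
\]
So in the case $\nu\neq0$ (normalized to $\nu'=1$) the invariant condition is $\gamma=1$, not $\gamma=0$: if $\gamma=1$ it stays $1$ and you get $\mu_{2,3}^n$; if $\gamma\neq1$ it can be brought to $0$ (choose $A_{n-1}=-\gamma A_1/\nu$, as in the paper), giving $\mu_{2,4}^n$, but it can never be moved to $1$. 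Your dichotomy ``$\gamma=0$ produces $\mu_{2,4}^n$ and $\gamma\neq0$ can be rescaled to $1$, producing $\mu_{2,3}^n$'' is therefore false: for instance $(\gamma,\nu)=(\tfrac12,1)$ is isomorphic to $\mu_{2,4}^n$, not to $\mu_{2,3}^n$, and $\gamma=0$ is not an invariant condition at all. The list of representatives you end with coincides with the correct one, but the argument misassigns isomorphism classes and supplies no valid invariant separating $\mu_{2,3}^n$ from $\mu_{2,4}^n$ (the paper's invariant is precisely $\alpha_1-1$ up to a nonzero factor); the analogous invariance statement is also what shows $\mu_{2,2}^n(\alpha)$ has $\alpha$ as a genuine modulus.

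A smaller bookkeeping point: from $e_ne_1=0$ you only get $e_ne_i=0$ for $i\leq n-2$ (via $e_ne_i=(e_ne_1)e_{i-1}$); the product $e_ne_{n-1}$ is not covered by that and needs its own identity, $(e_1e_n)e_{n-1}=e_1(e_ne_{n-1})$, which you omit while listing the analogous ones for $e_{n-1}e_n$ and $e_ne_n$ (and note $e_ne_{n-1}=0$ is needed before your argument that kills $\mu$). Likewise, the products $e_ie_{n-1}$ and $e_{n-1}e_i$ for $2\leq i\leq n-3$ lie in one-dimensional graded pieces and must be killed explicitly, using $e_ie_n=0$, $e_ne_i=0$ and $\gamma_1$-type relations as in the paper; your write-up gestures at this but the order of these deductions matters, exactly as you say.
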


\begin{proof} According to the condition of the theorem, we have the following multiplication:
\[\left\{\begin{aligned}
e_1e_i&=e_{i+1},& \quad 1\leq i\leq n-3, \\
e_1e_{n-2}&=0,&\\
e_1e_{n-1}&=e_n,&\\
e_1e_n&=0.& \\
\end{aligned}\right.
\]

It is easily seen that $\mathcal{A}_1=\{e_1,e_{n-1}\}, \mathcal{A}_2=\{e_2,e_n\}, \mathcal{A}_i=\{e_i\}$ for $3\leq i\leq n-2$. It follows $e_{n-2}\in Z(\mathcal{A})$.

Considering the next identities
\[e_1e_2=e_1(e_1e_1)=(e_1e_1)e_1=e_2e_1, \quad e_1e_3=e_1(e_2e_1)=(e_1e_2)e_1=e_3e_1,\]
we obtain that
\[e_1e_2=e_2e_1=e_3, \quad e_1e_3=e_3e_1=e_4.\]

Now we will prove the following equalities by an induction on $i$:
\begin{equation}\label{a2}e_1e_i=e_ie_1=e_{i+1}, \quad 1\leq i\leq n-3.
\end{equation}

Obviously, the equality holds for $i=2,3$. Let us assume that the equality \eqref{a2} holds for $2 < i < n - 3$, and we shall prove it for $i + 1$:
\[e_1e_{i+1}=e_1(e_ie_1)=(e_1e_i)e_1=e_{i+1}e_1,\]
so the induction proves the equalities \eqref{a2} for any $i, \ 2\leq i\leq n-3$.

From the following chain of equalities
\begin{align*}
e_ie_j&=e_i(e_1e_{j-1})=(e_ie_1)e_{j-1}=e_{i+1}e_{j-1}=\dots=e_{i+j-2}e_2\\
&=e_{i+j-2}(e_1e_1)=(e_{i+j-2}e_1)e_1=e_{i+j},
\end{align*}
we derive that
\[e_ie_j=e_{i+j}, \quad 2\leq i+j\leq n-2.\]

Let us introduce the notations:
\[e_{n-1}e_1=\gamma_1e_2+\alpha_1e_n, \quad e_{n-1}e_{n-1}=\gamma_2e_2+\alpha_2e_n, \quad e_{n-1}e_n=\beta_1e_3, \] \[e_ne_1=\beta_2e_3, \quad e_ne_{n-1}=\beta_3e_3, \quad e_ne_n=\beta_4e_4.\]

From the identities, we have
\[
\begin{array}{ccc}
\text{Identity }& & \text{ Constraint }\\[1mm]
\hline \hline\\[1mm]
(e_1e_n)e_1=e_1(e_ne_1)&\Longrightarrow &\beta_2=0, \\[1mm]
(e_1e_n)e_{n-1}=e_1(e_ne_{n-1})&\Longrightarrow &\beta_3=0,\\[1mm]
(e_{n-1}e_n)e_1=e_{n-1}(e_ne_1)&\Longrightarrow &\beta_1=0,\\[1mm]
(e_1e_{n-1})e_n=e_1(e_{n-1}e_n)&\Longrightarrow & \beta_4=0,\\[1mm]
(e_1e_{n-1})e_1=e_1(e_{n-1}e_1)&\Longrightarrow &\gamma_1=0, \\[1mm]
(e_1e_{n-1})e_{n-1}=e_1(e_{n-1}e_{n-1})&\Longrightarrow &\gamma_2=0.
\end{array}
\]

Considering the identities
\begin{align*}
e_ie_n&=(e_{i-1}e_1)e_n=e_{i-1}(e_1e_n)=0,\\
e_ne_i&=e_n(e_1e_{i-1})=(e_ne_1)e_{i-1}=0, && 2\leq i\leq n-4,\\
e_je_{n-1}&=(e_{j-1}e_1)e_{n-1}=e_{j-1}(e_1e_{n-1})=e_{j-1}e_n=0,\\
e_{n-1}e_j&=e_{n-1}(e_1e_{j-1})=(e_{n-1}e_1)e_{j-1}=\alpha_1e_ne_{j-1}=0, && 2\leq j\leq n-3,
\end{align*}
we deduce that
\[\left\{\begin{aligned}
e_ie_j&=e_{i+j},& \quad 2\leq i+j\leq n-2, \\
e_1e_{n-1}&=e_n,&\\
e_{n-1}e_1&=\alpha_1e_n,&\\
e_{n-1}e_{n-1}&=\alpha_2e_n.& \\
\end{aligned}\right.
\]

  We make the following general transformation of generators of the basis:
\[e_1^\prime=\sum\limits_{k=1}^nA_ke_k, \quad e_{n-1}^\prime=\sum\limits_{k=1}^nB_ke_k, \quad A_1(A_1+\alpha_2A_{n-1})(A_1B_{n-1}-A_{n-1}B_1)\neq0,\]
while the other elements of the new basis are obtained as products of the above elements.

From the next table of multiplications in this new basis
\[e_1^\prime e_n^\prime=0, \quad e_{n-1}^\prime e_1^\prime=\alpha_1^\prime e_n^\prime, \quad e_{n-1}^\prime e_{n-1}^\prime=\alpha_2^\prime e_n^\prime,\]
we have the following restrictions on the coefficients:
\[B_i=0, \quad 1\leq i\leq n-3,\]
and calculating new parameters we obtain:
\[\alpha_1^\prime=\frac{\alpha_1A_1+\alpha_2A_{n-1}}{A_1+\alpha_2A_{n-1}}, \qquad \alpha_2^\prime=\frac{\alpha_2B_{n-1}}{A_1+\alpha_2A_{n-1}}.\]

If $\alpha_2=0$, then $\alpha_2^\prime=0$ and $\alpha_1^\prime=\alpha_1$, and so   we have $\mu_{2,2}^n(\alpha)$.

If $\alpha_2\neq0$, then by choosing $B_{n-1}=\frac{A_1+\alpha_2A_{n-1}}{\alpha_2}$, we obtain $\alpha_2^\prime=1$.
It is easy to see that the next expression
\[\alpha_1^\prime-1=\frac{(\alpha_1-1)A_1}{A_1+\alpha_2A_{n-1}}\]
is an invariant expression.
\begin{itemize}
  \item If $\alpha_1=1$, then $\alpha_1^\prime=1$. Hence, we obtain $\mu_{2,3}^n$.
  \item If $\alpha_1\neq1$, then by choosing $A_{n-1}=-\frac{\alpha_1A_1}{\alpha_2}$, we obtain $\alpha_1^\prime=0$ and we have $\mu_{2,4}^n$.
\end{itemize}
\end{proof}

\end{document}